\newtheorem{theo}{Theorem}[section]
\newtheorem{tv}{Proposition}[section]
\newtheorem{n}{Corollary}[section]
\theoremstyle{remark}
\newcommand{\R}{\mathbb{R}}
\newcommand{\Expect}{\mathsf{E}}
\newcommand{\Real}{\operatorname{Re}}
\newcommand{\sign}{\operatorname{sign}}
\newcommand{\myiiint}{\int}
\title{Wave equation with a coloured stable noise}
\abstract{We define a random measure generated by a real anisotropic harmonizable fractional stable field $Z^H$ with stability parameter $\alpha\in(1,2)$ and Hurst index $H\in(1/2,1)$ and prove that the measure is  $\sigma$-additive in probability.  An integral with respect to this measure is constructed, which enables us to consider a wave equation in $\R^3$ with a random source generated by $Z^H$. We show that the solution to this equation, given by Kirchhoff's formula, has a modification, which is H\"older continuous of any order up to $(3H-1)\wedge 1$. In the case where $H\in(2/3,1)$, we show further that the modification is absolutely continuous. 
}
\keywords{Stochastic partial differential equation, wave equation, Kirchhoff's formula, harmonizable fractional stable field, symmetric $\alpha$-stable random measure, LePage representation, H\"older continuity}
\begin{document}

\maketitle

\section{Introduction}

Being a common tools to model complex systems with randomness, stochastic partial differential equations have attracted a considerable attention. The number of articles in this area is constantly growing, and we mention only few of them which consider subjects closely related to our work; further references are available therein. The sample properties of wave equations with Gaussian random noise were studied in \cite{dalang-frangos,dalang-sanz,millet-morien,quer-tindel,walsh}. Several authors investigated equation with square integrable L\'evy noise, where the same martingale methods are applicable, see \cite{applebaum,peszat-zabczyk} and references therein. 

However, in the heavy-tailed situation, there are only few results. Articles \cite{chong,mytnik,mueller-mytnik-stan,pryhara-shevchenko1,yang-zhou} are devoted to heat equations with stable noise. To the best of our knowledge, there are no articles studying the wave equation with a heavy-tailed noise, neither there are attempts to consider equations with coloured stable noise.  It is worth to mention that some authors investigated stochastic partial differential equations with a general stochastic measure, where, in particular, no assumptions on the integrability of measure are made. For example, the heat equation with a general stochastic measure was considered in \cite{bodnarchuk-shevchenko,radch2}, the wave equation, in \cite{bodnarchuk,gorodnya}. However, in these works only few results are proved and in rather restricted cases; it is probably the generality of exposition what leads to such restrictions. 

In this article we continue our research started in \cite{pryhara-shevchenko2}, where a planar wave equation was studied, and \cite{pryhara-shevchenko3}, which, as the present work, was devoted to wave equation in $\R^3$. The main object of our study is the wave equation
\begin{equation}\label{eq:wave-3d}
\left\{
\begin{aligned}
\Big(&\frac{\partial ^{2}}{\partial t^2}- a^2 \Delta\Big)U\left(x,t\right)=\dot Z^H(x), x\in\R^3, t>0,\\
&U(x,0)=0, \\
&\frac{\partial U}{\partial t}(x,0)= 0,
\end{aligned}
\right.
\end{equation}
where the source is a spatial random noise with symmetric $\alpha$-stable distribution: it is a derivative $\dot Z^H(x)$ of a real anisotropic harmonizable fractional stable field $Z^H$. Thus, the random noise is ``coloured'' in the sense that its increments are not independent. To give a meaning to this equation requires defining a random measure generated by $Z^H$ and an integral with respect to this measure. As far as we know, such questions did not appear in the previous literature. 

The rest of article is organized as follows. Section 2 contains preliminary information on stable random variables, measures and integrals. In Section 3, we recall the definition and properties of real anisotropic harmonizable fractional stable field, construct a random measure generated by this field, define integrals with respect to this measure and study their properties. In Section 4 we prove results on the sample path properties of the solution to \eqref{eq:wave-3d}. 

\section{Preliminaries}
Throughout the paper, $C$ will denote a generic constant; its value may vary between lines. Random constants will be denoted by $C(\omega)$.

In this article we will consider symmetric $\alpha$-stable $(S \alpha S)$ random variables. In this section we  will provide essential information about them; additional detail may be found in  \cite{samor-taqqu}. 

For $\alpha \in(0,2)$, a random variable $\xi$ is called $S\alpha S$ with scale parameter $||\xi||_\alpha$ if its characteristic function is 
$$\Expect \left[e^{i\lambda\xi}\right]=e^{-\lambda||\xi||_\alpha^{\alpha}}, \lambda \in \R.$$

A crucial role in the construction of processes and fields with stable distribution is played by independently scattered $S\alpha S$ random measure; in this article it is enough to consider a measure on $\R^3$. This is a function $M: \mathcal B_f(\R^3)\times\Omega\rightarrow\R$, where $\mathcal B_f(\R^3)$ is a family of Borel sets of finite Lebesgue measure, having the following properties:
\begin{enumerate}
	\item for any $A \in\mathcal B_f(\R^3)$, the random variable $ M (A)$ is  $S\alpha S$  with scale parameter equal to $\lambda(A)$ the Lebesgue measure of $A$;
	\item for any disjoint sets $A_1, \dots ,A_n \in\mathcal B_f(\R^3)$, the values $M(A_1),\dots ,M(A_n)$  are independent;
	\item for any disjoint sets $A_1, A_2,\dots  \in\mathcal B_f(\R^3)$ such that $\bigcup_{n=1}^\infty A_n\in\mathcal B_f(\R^3)$, the series $\sum_{n=1}^\infty M(A_n)$ converges almost surely and $M\left(\bigcup_{n=1}^\infty A_n\right)=\sum_{n=1}^\infty M(A_n)$ almost surely.
\end{enumerate}
For a function $f \in L^\alpha(\R^3)$, the integral 
$$I(f)=\myiiint_{\R^3}f(x)M(dx)$$
is defined as a limit in probability of integrals of finitely supported simple functions, and there is an isometric property:
$$\left\|I(f)\right\|^\alpha_\alpha=\myiiint_{\R^3}\left|f(x,t)\right|^\alpha dx.$$

A convenient tool to study stable random variables is the LePage series representation, defined for the measure $M$ as follows. Let $\varphi$ be any positive continuous probability density function on $\R^3$, and $\left\{\Gamma_{k},k\geq1\right\}$, $\left\{\xi_{k},k\geq1\right\}$, $\left\{g_k,k\geq1\right\}$ be independent families of random variables satisfying
\begin{itemize}
	\item $\left\{\Gamma_{k},k\geq1\right\}$ is a sequence of arrivals of a Poisson process with unit intensity;
	\item $\left\{\xi_{k},k\geq1\right\}$ are iid vectors in $\R^3$ with density $\varphi$;
	\item $\left\{g_k,k\geq1\right\}$ are iid centered Gaussian variables with  $\Expect [\left|g_k\right|^\alpha]=1.$
\end{itemize}
Then $\{M(A),A\in\mathcal{B}_f(\R^3)\}$ has the same distribution as 
\begin{equation}\label{eqLP}
M'(A)=C_\alpha \sum_{k\geq1}\Gamma_k^{-{1}/{\alpha}}\varphi(\xi_k)^{-1/\alpha} \mathbf{1}_A(\xi_k)g_k,\ A\in \mathcal B_f(\R^3),
\end{equation}
where $C_\alpha=\big(\frac{\Gamma(2-\alpha)\cos \frac{\pi\alpha}{2}}{1-\alpha}\big)^{1/\alpha}$; the series converges  almost surely for any $A\in\mathcal B_f(\R^3)$. 
There is also the LePage series representation for integrals: for any $f_1,f_2,\dots, f_n \in L^\alpha(\R^3)$ the vector $\left(I(f_1), I(f_2), \dots , I(f_n)\right)$ has the same distribution as $\left(I'(f_1), I'(f_2), \dots , I'(f_n)\right)$, where
\begin{equation}\label{eqLPI}
I'(f)=C_\alpha\sum_{k\geq1}\Gamma_k^{-{1}/{\alpha}}\varphi(\xi_k)^{-1/\alpha} f(\xi_k)g_k.
\end{equation}
 
Further we will assume without loss of generality that $M$ is given by  \eqref{eqLP} with 
$$\varphi(x)=\prod_{l=1}^3\frac{K}{\left|x_l\right| \left(\bigl|\log\left|x_l\right| \bigr|+1\right)^{1+\eta}}, $$
where $\eta>0$ is a fixed number, and
$K=\left(\int_{-\infty}^{+\infty}\left|x\right|^{-1}\left(\bigl|\log\left|x\right| \bigr|+1\right)^{-1-\eta}dx\right)^{-1}$ is the normalizing constant.
Respectively, the integral $I(f)=\myiiint_{\R^3}f(x)M(dx)$ is given by \eqref{eqLPI}.

We will need several auxiliary results. For convenience, we reformulate them to suit our needs.
\begin{tv}[\protect{\cite[Proposition 5.3.1]{samor-taqqu}}]\label{l31}
Let $X_n = I(f_n)$ with some $f_n\in L^\alpha(\R^3)$, $n\ge 1$. Then 
$$
I(f_n)\overset{\mathsf{P}}{\longrightarrow} I(f),\ n\to\infty,
$$
if and only if $\myiiint_{\R^3} |f_n(x) - f(x)|^\alpha dx\to 0$, $n\to\infty$. 
\end{tv}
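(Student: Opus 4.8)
The plan is to reduce the statement to a standard fact about $S\alpha S$ random variables. By linearity of the integral, $I(f_n) - I(f) = I(f_n - f)$, so, writing $g_n = f_n - f$, it is enough to prove that $I(g_n)\overset{\mathsf{P}}{\longrightarrow}0$ if and only if $\myiiint_{\R^3}|g_n(x)|^\alpha\,dx\to 0$ as $n\to\infty$. (Here $I(f)$ being meaningful presupposes $f\in L^\alpha(\R^3)$, which is in any case automatic from $f_n\in L^\alpha(\R^3)$ and $\myiiint_{\R^3}|f_n-f|^\alpha\,dx\to 0$ since $\alpha>1$, so that $g_n\in L^\alpha(\R^3)$ and $I(g_n)$ is well defined.) By the isometric property recalled above, $\|I(g_n)\|_\alpha^\alpha = \myiiint_{\R^3}|g_n(x)|^\alpha\,dx$, hence the assertion is equivalent to: a sequence of $S\alpha S$ random variables $Y_n := I(g_n)$ converges to zero in probability if and only if their scale parameters $\sigma_n := \|Y_n\|_\alpha$ tend to zero.

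For the ``if'' direction, the characteristic function of $Y_n$ is $\Expect\big[e^{i\lambda Y_n}\big] = e^{-|\lambda|^\alpha\sigma_n^\alpha}$; if $\sigma_n\to 0$, then $\Expect\big[e^{i\lambda Y_n}\big]\to 1$ for every $\lambda\in\R$, so $Y_n$ converges in distribution to $0$, which for a constant limit is equivalent to convergence in probability. Equivalently, when $\sigma_n>0$ one may write $Y_n = \sigma_n W_n$ with $W_n$ a standard $S\alpha S$ variable of unit scale; the $W_n$ are then identically distributed, hence tight, and multiplying by $\sigma_n\to 0$ gives $Y_n\to 0$ in probability, while $\sigma_n=0$ simply means $Y_n=0$ a.s.

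For the ``only if'' direction, suppose $Y_n\overset{\mathsf{P}}{\longrightarrow}0$. Then $Y_n$ converges to $0$ in distribution, so $\Expect\big[e^{i\lambda Y_n}\big] = e^{-|\lambda|^\alpha\sigma_n^\alpha}\to 1$ for each fixed $\lambda\ne 0$, forcing $\sigma_n^\alpha\to 0$. (A direct probabilistic argument also works: if $\sigma_{n_k}\ge\delta>0$ along a subsequence, then from $Y_{n_k}=\sigma_{n_k}W_{n_k}$ with $W_{n_k}$ having a fixed nondegenerate law we get $\mathsf{P}(|Y_{n_k}|>\varepsilon)\ge\mathsf{P}(|W_1|>\varepsilon/\delta)>0$ for small enough $\varepsilon$, contradicting $Y_{n_k}\overset{\mathsf{P}}{\longrightarrow}0$.) This implication is the only genuinely nontrivial point, and it is precisely where the structure of stable laws enters: each $Y_n$ is a deterministic rescaling of one and the same distribution — equivalently, the characteristic functions have the explicit exponential form — so that control of $Y_n$ in probability translates into control of the scale; for an arbitrary sequence this would fail. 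The degenerate indices with $\sigma_n=0$ are trivial, which completes the plan.
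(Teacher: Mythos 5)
Your argument is correct; note that the paper itself gives no proof of this proposition but simply cites it as Proposition 5.3.1 of Samorodnitsky--Taqqu, and your reduction via linearity and the isometry $\|I(g)\|_\alpha^\alpha=\int_{\R^3}|g(x)|^\alpha dx$, followed by the characteristic-function (or rescaling) argument that $S\alpha S$ variables tend to $0$ in probability exactly when their scale parameters do, is precisely the standard proof given in that reference. So you have correctly supplied the omitted proof along essentially the same lines as the cited source.
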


\begin{theo}\label{tv32t41}
Assume that $\alpha \in (1,2)$. For  $\mathbf{T}\in\mathcal{B} (\R^m)$ and measurable  $f_t(x): \mathbf{T}\times \R^3\rightarrow \R$ 
define
$$X(t)=\myiiint_{\R^3}f_t(x)M(dx), \, t\in \mathbf{T}.$$
If
\begin{equation}\label{i1}
\int_{\mathbf{T}}\left|X(t)\right|dt<\infty
\end{equation}
almost surely, then
\begin{equation*}
\int_{\mathbf{T}}X(t)dt=\myiiint_{\R^3}\int_{\mathbf{T}}f_t(x)dt\, M(dx)
\end{equation*}
almost surely.
A sufficient condition for \eqref{i1} to hold is
$$\int_{\mathbf{T}}\left(\myiiint_{\R^3}\left|f_t(x)\right|^\alpha dx\right)^{1/\alpha}dt<\infty.$$
\end{theo}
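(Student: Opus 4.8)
The plan is to prove the Fubini-type identity first for simple functions and then pass to the limit. First I would establish the claim when $f_t(x) = \sum_{j=1}^N \mathbf{1}_{B_j}(t) h_j(x)$ with $B_j \in \mathcal{B}(\mathbf{T})$ disjoint of finite measure and $h_j \in L^\alpha(\R^3)$; in this case both sides reduce to $\sum_j \lambda(B_j) I(h_j)$ by linearity of the integral $I$, so there is nothing to prove. The next step is to approximate a general $f_t(x)$ satisfying the sufficient condition $\int_{\mathbf{T}}\big(\int_{\R^3}|f_t(x)|^\alpha dx\big)^{1/\alpha}dt<\infty$ by such simple functions $f^{(n)}_t(x)$ in the mixed norm $\|f\|:=\int_{\mathbf{T}}\|f_t\|_{L^\alpha(\R^3)}\,dt$. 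Since $L^1(\mathbf{T}; L^\alpha(\R^3))$ is a Banach space in which simple functions are dense, such $f^{(n)}$ exist with $\|f^{(n)}-f\|\to 0$.

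Then I would pass to the limit on each side. For the right-hand side, $\big\|I\big(\int_{\mathbf{T}} f^{(n)}_t\,dt\big) - I\big(\int_{\mathbf{T}} f_t\,dt\big)\big\|_\alpha = \big\|\int_{\mathbf{T}}(f^{(n)}_t - f_t)\,dt\big\|_{L^\alpha(\R^3)} \le \int_{\mathbf{T}}\|f^{(n)}_t - f_t\|_{L^\alpha(\R^3)}\,dt \to 0$ by Minkowski's integral inequality, so by Proposition~\ref{l31} the right-hand side converges in probability. For the left-hand side, I would use $\Expect\big[\big|\int_{\mathbf{T}}(X^{(n)}(t) - X(t))\,dt\big| \wedge 1\big] \le \Expect\big[\int_{\mathbf{T}}|X^{(n)}(t) - X(t)|\,dt \wedge 1\big]$ and bound $\Expect[|I(g)|\wedge 1]$ from above in terms of $\|g\|_\alpha$ for $\alpha>1$ (for instance via $\Expect[|I(g)|\wedge 1] \le \Expect[|I(g)|^\beta]^{1/\beta} \le C\|g\|_\alpha$ for any $\beta\in(1,\alpha)$, using the standard moment estimate for $S\alpha S$ variables with $\beta<\alpha$), together with Fubini's theorem for the nonnegative integrand and Minkowski's inequality again, to get $\Expect\big[\int_{\mathbf{T}}|X^{(n)}(t)-X(t)|\,dt \wedge 1\big] \le C\int_{\mathbf{T}}\|f^{(n)}_t - f_t\|_{L^\alpha(\R^3)}\,dt \to 0$. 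Hence $\int_{\mathbf{T}} X^{(n)}(t)\,dt \to \int_{\mathbf{T}} X(t)\,dt$ in probability, and since the identity holds for each $f^{(n)}$, it passes to the limit; this also shows $\int_{\mathbf{T}}|X(t)|\,dt<\infty$ a.s., giving the sufficiency of the stated condition. Finally, to get the full statement under the weaker hypothesis \eqref{i1} alone, I would apply a localization argument: restrict to $\mathbf{T}_N = \{t : \int_{\R^3}|f_t(x)|^\alpha dx \le N\}$, where the sufficient condition holds on bounded subsets, prove the identity there, and let $N\to\infty$ using \eqref{i1} to control the remainder $\int_{\mathbf{T}\setminus\mathbf{T}_N}|X(t)|\,dt \to 0$ a.s. by dominated convergence, while on the noise side one checks that $\int_{\mathbf{T}\setminus\mathbf{T}_N} f_t\,dt \to 0$ appropriately.

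The main obstacle I anticipate is the left-hand side limit passage: unlike the $L^2$ martingale-measure setting, here there is no isometry into a Hilbert space and only moments of order strictly below $\alpha$ are finite, so I must be careful to work with the truncated expectation $\Expect[\,\cdot\wedge 1]$ (a metric for convergence in probability) rather than with $L^1$ or $L^\alpha$ norms of $X(t)$ directly, and to justify interchanging $\Expect$, $\int_{\mathbf{T}}$ and the $L^\alpha(\R^3)$-norm — Tonelli's theorem handles the nonnegative integrand, and the sub-$\alpha$ moment bound for stable integrals (Property from \cite[Section 3]{samor-taqqu}) supplies the constant $C$. The localization step to reduce \eqref{i1} to the sufficient condition also needs a small amount of care to ensure the approximating integrals on $\mathbf{T}_N$ are consistent as $N$ grows, but this is routine once the core approximation argument is in place.
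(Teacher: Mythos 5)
Your argument for the \emph{sufficient-condition} case is essentially sound: approximating $f$ by $L^\alpha(\R^3)$-valued simple functions in the mixed norm $\int_{\mathbf{T}}\|f_t\|_{L^\alpha}\,dt$, using Proposition~\ref{l31} on the right-hand side and a moment bound on the left (for $\alpha\in(1,2)$ you can even use $\Expect|I(g)|=c_\alpha\|g\|_{L^\alpha}$ directly, no truncation needed), does give both the finiteness of $\int_{\mathbf{T}}|X(t)|\,dt$ and the Fubini identity under that stronger hypothesis. Note, however, that the paper does not argue this way at all: its proof is a one-line citation of Theorems 3.3 and 4.1 of Samorodnitsky's work, plus the remark that the joint measurability of $(t,\omega)\mapsto X(t,\omega)$ (needed even to write $\int_{\mathbf{T}}X(t)\,dt$, and which you pass over silently) follows from the LePage representation \eqref{eqLPI}.

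The genuine gap is in your final localization step, which is where the real content of the theorem lies. Under \eqref{i1} alone, restricting to $\mathbf{T}_N=\{t:\|f_t\|_{L^\alpha}\le N\}\cap B(0,N)$ and applying your first part gives $\int_{\mathbf{T}_N}X(t)\,dt=I(g_N)$ with $g_N=\int_{\mathbf{T}_N}f_t\,dt$, and dominated convergence handles the left-hand side; but on the right-hand side your phrase ``one checks that $\int_{\mathbf{T}\setminus\mathbf{T}_N}f_t\,dt\to0$ appropriately'' presupposes exactly what must be proved, namely that $\int_{\mathbf{T}}f_t(x)\,dt$ exists (is absolutely convergent for a.e.\ $x$) and lies in $L^\alpha(\R^3)$. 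The best your scheme yields is that $I(g_N)$ converges a.s., hence (by the scale/probability comparison for $S\alpha S$ variables) $g_N$ is Cauchy in $L^\alpha$ and $\int_{\mathbf{T}}X(t)\,dt=I(g)$ for the $L^\alpha$-limit $g$; cancellations in $t$ could make $g_N$ converge while $\int_{\mathbf{T}}|f_t(x)|\,dt=\infty$ on a set of positive measure, so you cannot identify $g$ with the inner integral appearing in the statement. The implication ``$\int_{\mathbf{T}}|X(t)|\,dt<\infty$ a.s.\ $\Rightarrow$ $\int_{\R^3}\bigl(\int_{\mathbf{T}}|f_t(x)|\,dt\bigr)^\alpha dx<\infty$'' is precisely the nontrivial part of the cited Theorem 3.3 (its proof goes through conditional Gaussian/zero--one law arguments, not a routine truncation), and it is strictly stronger than what your mixed-norm condition can reach, since $\bigl\|\int_{\mathbf{T}}|f_t|\,dt\bigr\|_{L^\alpha}\le\int_{\mathbf{T}}\|f_t\|_{L^\alpha}\,dt$ with the inequality generally strict. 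Without supplying this step (or citing it), your proof establishes only the last sentence of the theorem and the identity under the stronger hypothesis, not the theorem as stated.
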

\begin{proof}
This is a compilation of Theorem 3.3 and Theorem 4.1 from \cite{samor}. The measurability of $X$, assumed in these theorems, in our case is a consequence of the series representation \ref{eqLPI}.
\end{proof}

\section{Real anisotropic harmonizable fractional stable field}

The real anisotropic harmonizable fractional stable field with Hurst parameter $H\in(0,1)$ is defined as
$$Z^H(x)=\Real\myiiint_{\R^3}\prod_{l=1}^{3}\frac{e^{ix_ly_l}-1}{\left|y_l\right|^{H+1/\alpha}}M(dy)$$
for technical reasons we will restrict ourselves to the case $\alpha\in(1,2)$, $H\in (1/2,1)$.
The properties of $Z^H$ were established in \cite{kono-maejima}, in \cite{rhmsf} its multifractional was defined and investigated. In particular, it was proved in the cited articles that this field has a modification, which is locally H\"older continuous of any order $\beta\in (0,H)$.
Thanks to our assumptions about $M$, $Z^H$ can be represented as its LePage series: 
$$Z^H(x)=C_\alpha\Real\sum_{k=1}^{\infty}\Gamma_k^{-1/\alpha}\prod_{l=1}^3\frac{e^{ix_l\xi_{k,l}}-1}{\left|\xi_{k,l}\right|^{H+1/\alpha}}\varphi\left(\xi_k\right)^{-1/\alpha}g_k,$$
де 
$$\varphi(x)=\prod_{l=1}^3\frac{K}{\left|x_l\right| \left(\bigl|\log\left|x_l\right| \bigr|+1\right)^{1+\eta}}. $$

Our aim is to define a random measure corresponding to $Z^H(x)$. Towards this end, we will use the following heuristic reasoning. Differentiate  $Z^H(x)$ informally:
\begin{gather*}
\frac{\partial^3}{\partial x_1\partial x_2 \partial x_3}Z^H(x)=\Real\left(-i\myiiint_{\R^3}e^{i(x,y)}\prod_{l=1}^3\frac{\sign y_l}{\left|y_l\right|^{H+1/\alpha-1}}M(dy)\right)\\
=\myiiint_{\R^3}\sin(x,y)\prod_{l=1}^3\frac{\sign y_l}{\left|y_l\right|^{H+1/\alpha-1}}M(dy).
\end{gather*}

Note that the integral is not well defined in general, but it allows to define the random measure corresponding to $Z^H$ as a result of formal differentiation and change of order of integration:
\begin{gather*}
Z^H(A)=\myiiint_{A}\frac{\partial^3}{\partial x_1\partial x_2 \partial x_3}Z^{H}(x)\,dx\\
=\myiiint_{A}\myiiint_{\R^3}\sin(x,y)\prod_{l=1}^3\frac{\sign y_l }{\left|y_l\right|^{H+1/\alpha-1}}M(dy)\, dx\\
=\myiiint_{\R^3}\prod_{l=1}^3\frac{\sign y_l }{\left|y_l\right|^{H+1/\alpha-1}}\myiiint_{A} \sin(x,y) dy\, M(dz).
\end{gather*}
In other words, we set
\begin{equation}\label{zh}
\begin{gathered}
Z^H(A)=\myiiint_{\R^3}\myiiint_A\sin(x,y)dx\prod_{l=1}^3\frac{\sign y_l }{\left|y_l\right|^{H+1/\alpha-1}}M(dy)
\end{gathered}
\end{equation}
by definition.

\begin{theo}\label{1t} 
For any $A\in B_f\left(\R^3\right)$, the integral in \eqref{zh} is well defined.
\end{theo}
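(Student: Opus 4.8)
The plan is as follows. Writing $\gamma=H+1/\alpha-1$, $h_A(y)=\myiiint_A\sin(x,y)\,dx$ and $g_A(y)=h_A(y)\prod_{l=1}^3\sign(y_l)\,|y_l|^{-\gamma}$, the definition \eqref{zh} reads $Z^H(A)=I(g_A)$, so by the isometric description of the stable integral recalled in Section~2 the statement is equivalent to $g_A\in L^\alpha(\R^3)$, i.e.
$$
\myiiint_{\R^3}\bigl|h_A(y)\bigr|^\alpha\prod_{l=1}^3|y_l|^{-\alpha\gamma}\,dy<\infty .
$$
Before anything else I would record the elementary arithmetic that governs the whole argument. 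Since $\alpha\in(1,2)$ and $H\in(1/2,1)$ one has $\alpha\gamma=\alpha H+1-\alpha$, hence $1-\alpha/2<\alpha\gamma<1$; the right inequality ($\iff H<1$) makes $\int_{-1}^{1}|t|^{-\alpha\gamma}\,dt<\infty$, and the left inequality ($\iff H>1/2$) is exactly the statement that
$$
\beta:=\frac{2\alpha\gamma}{2-\alpha}>1 ,
$$
so that $\int_{\{|t|>1\}}|t|^{-\beta}\,dt<\infty$. It is this last fact, i.e. the strict inequality $H>1/2$, that will make the estimates close. Two further ingredients are $h_A(y)=\Imag\widehat{\mathbf 1_A}(y)$, whence $|h_A(y)|\le\lambda(A)$ for every $y$ and, by Plancherel's identity, $\|h_A\|_{L^2(\R^3)}\le(2\pi)^{3/2}\lambda(A)^{1/2}<\infty$; the same will be used for the Fourier transforms of one- and two-dimensional sections of $A$.

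I would first prove the bound for $A$ bounded, say $A\subseteq[-R,R]^3$, and then recover a general $A\in\mathcal B_f(\R^3)$ by applying the estimate to $A\cap[-n,n]^3$ and checking $L^\alpha(\R^3)$-convergence of the corresponding functions $g$. For bounded $A$ I would decompose $\R^3=\{|y|\le\sqrt3\}\cup\bigcup_{\emptyset\ne S\subseteq\{1,2,3\}}F_S$, where $F_S=\{y:|y_l|>1\text{ for }l\in S,\ |y_l|\le1\text{ for }l\notin S\}$. On $\{|y|\le\sqrt3\}$ one simply uses $|h_A|\le\lambda(A)$ together with the integrability of the weight there, which holds because $\alpha\gamma<1$. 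On $F_{\{1,2,3\}}$ one applies H\"older's inequality with exponents $2/\alpha$ and $2/(2-\alpha)$ directly, bounding the first factor by $\|h_A\|_{L^2(\R^3)}^\alpha$ and the second by a finite power of $\int_{\{|t|>1\}}|t|^{-\beta}\,dt$.

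The substantial part is the ``mixed'' regions $F_S$ with $1\le|S|\le2$; by symmetry it suffices to treat $S=\{1\}$ and $S=\{1,2\}$. The obstruction here is that the factors $|y_l|^{-\alpha\gamma}$ with $l\notin S$ are integrable near $0$ but, once raised to the H\"older power $2/(2-\alpha)$, have exponent $\ge1$, so one cannot apply H\"older to the whole weight at once. My fix is to slice $A$ in the ``small'' variables: for $S=\{1\}$, Fubini gives $h_A(y)=\Imag\myiiint_{[-R,R]^2}\widehat{\mathbf 1_{A^{(x_2,x_3)}}}(y_1)\,e^{i(x_2y_2+x_3y_3)}\,dx_2\,dx_3$ with $A^{(x_2,x_3)}=\{x_1:(x_1,x_2,x_3)\in A\}$, so by the power-mean inequality $|h_A(y)|^\alpha\le(4R^2)^{\alpha-1}\myiiint_{[-R,R]^2}\bigl|\widehat{\mathbf 1_{A^{(x_2,x_3)}}}(y_1)\bigr|^\alpha\,dx_2\,dx_3$. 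Integrating $|y_1|^{-\alpha\gamma}$ over $\{|y_1|>1\}$ against this, applying H\"older in $y_1$ together with the one-dimensional Plancherel identity (this produces the finite factor $\bigl(\int_{\{|y_1|>1\}}|y_1|^{-\beta}\,dy_1\bigr)^{(2-\alpha)/2}$ and $(2\pi\lambda(A^{(x_2,x_3)}))^{\alpha/2}$, finite precisely because $\beta>1$), then using H\"older on the bounded set $[-R,R]^2$ to bound $\int\lambda(A^{(x_2,x_3)})^{\alpha/2}\,dx_2\,dx_3$ by $\lambda(A)^{\alpha/2}(4R^2)^{1-\alpha/2}$, while the remaining factor $|y_2|^{-\alpha\gamma}|y_3|^{-\alpha\gamma}$ is integrated over $[-1,1]^2$ using $\alpha\gamma<1$. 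For $S=\{1,2\}$ the same scheme works with a two-dimensional section $A^{x_3}$ and the two-dimensional Plancherel identity in $(y_1,y_2)$. Adding the finitely many contributions yields $g_A\in L^\alpha(\R^3)$.

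The main difficulty is thus the design of the splitting so that in every coordinate direction the surviving weight is integrable on the range where that coordinate lives; this is what forces the slicing in the ``small'' variables, and it is the inequality $\beta>1$, equivalently $H>1/2$, that makes the ``large''-variable estimate converge. I expect the reduction from bounded to general finite-measure sets to be the other point requiring some care.
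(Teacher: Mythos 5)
The bounded-set part of your argument is correct, and it takes a genuinely different route from the paper's. The paper simply notes that $f_A=A^H\mathbf{1}_A$ with $\mathbf{1}_A\in L^1(\R^3)\cap L^2(\R^3)$ and invokes Proposition~\ref{a1}, whose proof splits $\R^3$ only into $B(0,1)$ and its complement and applies a single H\"older inequality with exponents $2/\alpha$, $q=2/(2-\alpha)$ plus Parseval on $B(0,1)^c$. Your finer decomposition according to which coordinates exceed $1$, with slicing of $A$ in the ``small'' variables and one-/two-dimensional Plancherel in the ``large'' ones, addresses a genuine difficulty: since $q\alpha\gamma=\beta>1$ exactly when $H>1/2$, the weight $\prod_l|y_l|^{-\alpha\gamma}$ raised to the H\"older power is not integrable near the coordinate hyperplanes inside $B(0,1)^c$ (the second factor in \eqref{B} actually diverges there; the spherical-coordinate bound in the appendix absorbs a divergent angular integral into the constant). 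So your mixed-region treatment is more careful precisely at the point where the paper's argument via Proposition~\ref{a1} is cavalier, and for bounded $A$ your estimates (power mean over $[-R,R]^2$, H\"older in the large variable, one- or two-dimensional Plancherel for the sections, then H\"older in the slicing variables) all check out.

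There is, however, a genuine gap in the passage to general $A\in\mathcal B_f(\R^3)$: finite Lebesgue measure does not imply boundedness, and your reduction does not close. The bound you prove for $A\subseteq[-R,R]^3$ carries the factors $(4R^2)^{\alpha-1}$ and $(4R^2)^{1-\alpha/2}$ (and their analogues for $|S|=2$), so it grows like $R^{\alpha}\lambda(A)^{\alpha/2}$ and is not uniform over $A_n=A\cap[-n,n]^3$. Hence Fatou is useless (pointwise $g_{A_n}\to g_A$ does hold by dominated convergence, but you have no uniform bound), and the announced $L^\alpha(\R^3)$-convergence cannot simply be ``checked'': $g_A-g_{A_n}=g_{A\setminus[-n,n]^3}$, and $A\setminus[-n,n]^3$ is unbounded, so the only estimate you possess does not apply to it --- the step is circular. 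What the statement requires is a bound depending on $A$ only through $\lambda(A)$, i.e.\ through $\|\mathbf{1}_A\|_{L^1}$ and $\|\mathbf{1}_A\|_{L^2}$, which is exactly what the paper's Proposition~\ref{a1} is designed to provide; in your scheme it is the power-mean step over $[-R,R]^2$ (and over $[-R,R]$ for $|S|=2$) that destroys this independence, and it would have to be replaced by an argument using only section measures in an integrated ($L^1$/$L^2$) way, or by a mixed-norm or interpolation bound on $\widehat{\mathbf{1}_A}$, before the general case follows.
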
 
\begin{proof}
Denote
$$f_A(y)=\myiiint_A\sin(x,y)dx\prod_{l=1}^3\frac{\sign y_l }{\left|y_l\right|^{H+1/\alpha-1}}.$$
We need to show that 
$$\myiiint_{\R^3}\left|f_A(y)\right|^\alpha dy <\infty.$$
Note that $f_A(y)=A^H\mathbf{1}_A(y)$, where $A^H$ is defined in \eqref{ah}. Since $\mathbf{1}_A(y) \in L^1(\R^3)\cap L^2(\R^3)$, the finiteness of integral follows from Proposition~\ref{a1}. 
\end{proof}

In \cite{pryhara-shevchenko3} we have proposed to define the integral with respect to $Z^H$ as follows. For $\varepsilon>0$ define smooth approximations of $Z^H$ by
\begin{gather*}
Z^{H,\varepsilon}(x)=\myiiint_{\R^3}\sin(x,y)e^{-\varepsilon^2\left|y\right|^2/2}\prod_{l=1}^3\frac{\sign y_{k,l}}{\left|y_{k,l}\right|^{H+1/\alpha-1}}M(dy).
\end{gather*}
It was proved in \cite{pryhara-shevchenko3} that there exists a weak derivative $\frac{\partial^3}{\partial x_1\partial x_2 \partial x_3}Z^{H, \varepsilon}(x)$ and, in view of this,  the following definition was proposed:
\begin{equation}\label{lim2}
\myiiint_{\R^3}f(x)Z^H(dx)=\lim_{\varepsilon\rightarrow 0+}\myiiint_{\R^3}f(x)\frac{\partial^3}{\partial x_1\partial x_2 \partial x_3}Z^{H,\varepsilon}(x)\,dx,
\end{equation}
provided that the limit in probability exists. Let us show that \eqref{zh} agrees with this definition.
\begin{theo}\label{iz}
For any $A\in\mathcal B_f(\R^3)$, the following convergence in probability holds:
$$Z^H(A)=\lim_{\varepsilon\rightarrow 0+}\myiiint_{A}Z^{H,\varepsilon}(x)dx.$$
\end{theo}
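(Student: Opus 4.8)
The plan is to compute both sides as stable integrals against $M$ and then invoke Proposition~\ref{l31} to reduce the almost-sure/probability identity to a deterministic $L^\alpha(\R^3)$ convergence of integrands. First I would fix $A\in\mathcal B_f(\R^3)$ and recall from \eqref{zh} that $Z^H(A)=I(f_A)$ with $f_A(y)=\left(\myiiint_A\sin(x,y)\,dx\right)\prod_{l=1}^3\frac{\sign y_l}{|y_l|^{H+1/\alpha-1}}$. On the other side, $Z^{H,\varepsilon}(x)=I(h^\varepsilon_x)$ where $h^\varepsilon_x(y)=\sin(x,y)\,e^{-\varepsilon^2|y|^2/2}\prod_{l=1}^3\frac{\sign y_l}{|y_l|^{H+1/\alpha-1}}$, and one checks that $x\mapsto h^\varepsilon_x$ is nice enough (the Gaussian factor gives integrability and the LePage representation gives measurability of $X^\varepsilon(x):=Z^{H,\varepsilon}(x)$) so that Theorem~\ref{tv32t41} applies with $\mathbf T=A$: the sufficient condition $\int_A\left(\myiiint_{\R^3}|h^\varepsilon_x(y)|^\alpha dy\right)^{1/\alpha}dx<\infty$ holds because the inner integral is finite uniformly in $x$ over the bounded set $A$ (here the $e^{-\varepsilon^2|y|^2}$ kills the large-$y$ tail and $H+1/\alpha-1<1/\alpha$ i.e. $H<1$ controls the origin in each coordinate — one may need to be a bit careful near $y_l=0$, but the bound $|\sin(x,y)|\le 1$ together with $H+1/\alpha-1$ being the exponent is exactly the regime treated in \cite{pryhara-shevchenko3}). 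Hence $\myiiint_A Z^{H,\varepsilon}(x)\,dx=I(f_A^\varepsilon)$ where $f_A^\varepsilon(y)=\left(\myiiint_A\sin(x,y)\,e^{-\varepsilon^2|y|^2/2}\,dx\right)\prod_{l=1}^3\frac{\sign y_l}{|y_l|^{H+1/\alpha-1}}=e^{-\varepsilon^2|y|^2/2}f_A(y)$.

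Next I would apply Proposition~\ref{l31}: the desired convergence $I(f^\varepsilon_A)\overset{\mathsf P}{\to}I(f_A)$ is equivalent to
\begin{equation*}
\myiiint_{\R^3}\bigl|f^\varepsilon_A(y)-f_A(y)\bigr|^\alpha\,dy=\myiiint_{\R^3}\bigl(1-e^{-\varepsilon^2|y|^2/2}\bigr)^\alpha\,|f_A(y)|^\alpha\,dy\longrightarrow 0,\ \varepsilon\to0+.
\end{equation*}
Since $0\le 1-e^{-\varepsilon^2|y|^2/2}\le 1$ and $1-e^{-\varepsilon^2|y|^2/2}\to0$ pointwise as $\varepsilon\to0+$, and since $|f_A|^\alpha\in L^1(\R^3)$ by Theorem~\ref{1t} (equivalently Proposition~\ref{a1}, using $\mathbf 1_A\in L^1\cap L^2$), the dominated convergence theorem gives the result immediately.

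The main obstacle, and the only place requiring genuine care rather than bookkeeping, is justifying that $\myiiint_A Z^{H,\varepsilon}(x)\,dx$ equals the stable integral $I(f^\varepsilon_A)$ — that is, verifying the hypotheses of the stochastic Fubini theorem (Theorem~\ref{tv32t41}) for the regularised integrand $h^\varepsilon_x$, including its joint measurability in $(x,y)$ and the finiteness of $\myiiint_{\R^3}|h^\varepsilon_x(y)|^\alpha\,dy$ uniformly enough in $x\in A$ for the sufficient integrability condition to hold. Once the exchange of $\int_A\cdots dx$ with $\myiiint_{\R^3}\cdots M(dy)$ is legitimate, the identification $f^\varepsilon_A=e^{-\varepsilon^2|\cdot|^2/2}f_A$ is a one-line deterministic Fubini for the ordinary integral $\myiiint_A\sin(x,y)\,dx$, and the rest is the dominated-convergence argument above. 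I would also note that, strictly speaking, this verification was essentially already carried out in \cite{pryhara-shevchenko3} where the definition \eqref{lim2} was set up, so in the write-up it may suffice to cite that and concentrate on the $L^\alpha$-convergence step.
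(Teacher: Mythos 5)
Your proposal is correct and follows essentially the same route as the paper: identify $\int_A Z^{H,\varepsilon}(x)\,dx$ with the stable integral of $e^{-\varepsilon^2|\cdot|^2/2}f_A$ via the stochastic Fubini theorem (Theorem~\ref{tv32t41}), then conclude by Proposition~\ref{l31} and dominated convergence, using $\int_{\R^3}|f_A(y)|^\alpha dy<\infty$ from Theorem~\ref{1t}; the paper checks the Fubini hypothesis with the bound $|\sin(x,y)|\le 1\wedge|x||y|$ and a spherical change of variables, whereas your cruder bound $|\sin(x,y)|\le 1$ also works because the Gaussian factor handles large $|y|$ and $\alpha(H-1)+1<1$ handles the coordinate singularities. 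One small correction: a set $A\in\mathcal B_f(\R^3)$ has finite Lebesgue measure but need not be bounded; this does not affect your argument, since your bound on the inner integral is independent of $x$, so only $\lambda(A)<\infty$ is needed.
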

\begin{proof}
Denote
$$f_{A,\varepsilon}(y)= e^{-\varepsilon^2|y|^2/2} f_A(y)= e^{-\varepsilon^2|y|^2/2}\prod_{l=1}^3\frac{1}{\left|y_l\right|^{H+1/\alpha-1}} \myiiint_A\sin(x,y)dx$$
and
\begin{equation}\label{zhe}
\begin{gathered}
Z^{H,\varepsilon}(A)=\myiiint_{\R^3}\myiiint_A\sin(x,y)e^{-\varepsilon^2|y|^2/2}\prod_{l=1}^3\frac{1}{\left|y_l\right|^{H+1/\alpha-1}}dx\,M(dy)\\
=\myiiint_{\R^3}f_{A,\varepsilon}(y)M(dy).
\end{gathered}
\end{equation}
Let us first prove the possibility to change the order of integration in  \eqref{zhe}. To this end, according to Theorem \ref{tv32t41}, it is enough to show that
$$\myiiint_A\left(\myiiint_{\R^3}\left|\sin(x,y)\right|^\alpha e^{-\alpha\varepsilon^2\left|y\right|^2/2}\prod_{l=1}^3\frac{1}{\left|y_l\right|^{\alpha(H-1)+1}}dy\right)^{1/\alpha}dx<\infty.$$
Estimate
\begin{gather*}
\myiiint_{\R^3}\left|\sin(x,y)\right|^\alpha e^{-\alpha\varepsilon^2\left|y\right|^2/2}\prod_{l=1}^3\frac{dy}{\left|y_l\right|^{\alpha(H-1)+1}}\\
\leq \myiiint_{\R^3}\left(1\wedge\left|x\right|\left|y\right|\right)^\alpha e^{-\alpha\varepsilon^2\left|y\right|^2/2}\prod_{l=1}^3\frac{dy}{\left|y_l\right|^{\alpha(H-1)+1}}:=I.
\end{gather*}
Through the spherical change of variables 
\begin{equation}\label{sfz}
\begin{gathered}
 y_1=\rho\sin\theta\cos\nu; y_2=\rho\sin\theta\sin\nu; y_3=\rho\cos\theta; \rho>0, \theta\in [0,\pi], \nu\in [0,2\pi], 
\end{gathered}
\end{equation}
we get
\begin{gather*}
I=\int_0^\infty\int_0^\pi\int_0^{2\pi}\left(1\wedge\left|x\right|\rho\right)^\alpha \frac{e^{-\alpha\varepsilon^2\rho^2/2}\rho^2\left(\sin\theta\right)^{2\alpha(H-1)-1}d\rho\, d\theta \,d\varphi}{\rho^{3\alpha(H-1)+3}\left(\cos\varphi\sin\varphi\cos\theta\right)^{1-\alpha(H-1)}}\\
\leq C\int_0^\infty \left(1\wedge\left|x\right|\rho\right)^\alpha e^{-\alpha\varepsilon^2\rho^2/2}\rho^{3\alpha(1-H)-1}d\rho<\infty.
\end{gather*}
Therefore, 
$$\myiiint_A \frac{\partial^3}{\partial x_1\partial x_2 \partial x_3}Z^{H,\varepsilon}(x)dx=\myiiint_{\R^3}f_{A,\varepsilon}(y)M(dy).$$
Now 
\begin{gather*}
\myiiint_{\R^3}\left|f_A(y)-f_{A,\varepsilon}(y)\right|^\alpha dy
\leq\myiiint_{\R^3}\big|e^{-\alpha\varepsilon^2\left|y\right|^2/2}-1\big|\left|f_A(y)\right|^\alpha dy\rightarrow 0, \varepsilon\rightarrow 0,
\end{gather*}
in view of the dominated convergence theorem, since $\int_{\R^3} \left|f_A(y)\right|^\alpha dy<\infty$ by Theorem~\ref{1t}.
Therefore, by Proposition~\ref{l31}, 
\begin{gather*}
\myiiint_A \frac{\partial^3}{\partial x_1\partial x_2 \partial x_3}Z^{H,\varepsilon}(x)dx \overset{\mathsf{P}}{\longrightarrow}Z^H(A), n\rightarrow \infty.\qedhere
\end{gather*}
\end{proof}

Now we will prove that $Z^H(A)$ is a stochastic measure, that is, that is a $\sigma$-additive in probability function of $A\in\mathcal{B}_f(\R^3)$. From the definition and linearity of the integral w.r.t.\ $M$ it is clear that $Z^H(A)$ is additive. Therefore, it suffices to prove only continuity at zero, which is done in the following proposition.
\begin{tv}
Let $\left\{A_n, n\geq 1\right\}\subset B_f(\R^3)$ be such that for any $n \geq 1$, $A_{n+1}\subset A_n$, and $\cap_{n=1}^\infty A_n=\varnothing$. Then $Z^H(A_n)\overset{\mathsf{P}}{\longrightarrow} 0, n\rightarrow\infty.$
\end{tv}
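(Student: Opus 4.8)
The plan is to reduce the assertion, via Proposition~\ref{l31}, to an $L^\alpha$-convergence statement for the integrands. Write $Z^H(A_n)=I\!\left(f_{A_n}\right)$ with $f_{A_n}=A^H\mathbf 1_{A_n}$, where $A^H$ is the operator introduced in \eqref{ah}; each $f_{A_n}$ belongs to $L^\alpha(\R^3)$ by Theorem~\ref{1t}, and $I(0)=0$. Thus Proposition~\ref{l31}, applied with limiting function equal to $0$, shows that $Z^H(A_n)\overset{\mathsf{P}}{\longrightarrow}0$ is equivalent to
\[
\myiiint_{\R^3}\bigl|f_{A_n}(y)\bigr|^{\alpha}\,dy\longrightarrow 0,\qquad n\to\infty .
\]

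First I would note that $\lambda(A_n)\to 0$: the $A_n$ decrease, $\bigcap_n A_n=\varnothing$, and $\lambda(A_1)<\infty$, so this is continuity of the Lebesgue measure. Equivalently $\mathbf 1_{A_n}\to 0$ pointwise with $\mathbf 1_{A_n}\le\mathbf 1_{A_1}\in L^1(\R^3)\cap L^2(\R^3)$, whence by dominated convergence $\|\mathbf 1_{A_n}\|_{L^1(\R^3)}=\lambda(A_n)\to 0$ and $\|\mathbf 1_{A_n}\|_{L^2(\R^3)}=\lambda(A_n)^{1/2}\to 0$. I would then invoke the quantitative form of Proposition~\ref{a1}: its proof controls $\|A^H g\|_{L^\alpha(\R^3)}$ by an expression in $\|g\|_{L^1(\R^3)}$ and $\|g\|_{L^2(\R^3)}$ that is continuous and vanishes at the origin. (If one prefers to quote Proposition~\ref{a1} only in the qualitative form stated there, boundedness of $A^H$ as a linear map from $\bigl(L^1(\R^3)\cap L^2(\R^3),\,\|\cdot\|_{L^1}+\|\cdot\|_{L^2}\bigr)$ into $L^\alpha(\R^3)$ still follows from the closed graph theorem: both spaces are Banach, and $A^H$ is closed, since convergence in $L^1\cap L^2$ forces, along a subsequence, a.e.\ convergence of the Fourier transforms that enter the definition of $A^H$.) Taking $g=\mathbf 1_{A_n}$ and combining with the previous step gives $\|f_{A_n}\|_{L^\alpha(\R^3)}=\|A^H\mathbf 1_{A_n}\|_{L^\alpha(\R^3)}\to 0$, which is the displayed limit, and the proposition follows.

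I expect the genuine content to lie entirely in the second step, i.e.\ in the passage through Proposition~\ref{a1}, and not in a direct estimate, for the following reason: $A\mapsto f_A$ is not a local operation, since $f_A(y)=\bigl(\prod_{l=1}^{3}\frac{\sign y_l}{|y_l|^{H+1/\alpha-1}}\bigr)\myiiint_A\sin(x,y)\,dx$ depends on $\mathbf 1_A$ only through its Fourier transform. In particular the $f_{A_n}$ have no common $L^\alpha$ majorant: replacing $\myiiint_A\sin(x,y)\,dx$ by $\myiiint_A|\sin(x,y)|\,dx$ destroys the decay as $|y|\to\infty$ and yields a function that fails to be in $L^\alpha(\R^3)$ — this is precisely why the Gaussian truncation $e^{-\varepsilon^2|y|^2/2}$ was indispensable in the proof of Theorem~\ref{iz}. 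Hence dominated convergence cannot be applied to $\myiiint_{\R^3}|f_{A_n}|^\alpha\,dy$ directly, and one must exploit the cancellation encoded in $A^H$; the only real work is to read off from the proof of Proposition~\ref{a1} the continuous, vanishing-at-zero dependence on $\|\mathbf 1_{A_n}\|_{L^1}$ and $\|\mathbf 1_{A_n}\|_{L^2}$ (or, alternatively, to run the closed-graph argument).
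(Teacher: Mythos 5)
Your proof is correct and takes essentially the same route as the paper: reduce via Proposition~\ref{l31} to $\myiiint_{\R^3}|f_{A_n}(y)|^\alpha dy\to 0$, observe $\lambda(A_n)\to 0$ by continuity of the Lebesgue measure, and apply Proposition~\ref{a1} with $g=\mathbf 1_{A_n}$. The only superfluous part is your worry about a ``qualitative'' reading of Proposition~\ref{a1}: as stated it already gives the quantitative bound $\left\|A^{H}g\right\|_{L^\alpha(\R^3)}\leq C(H,\alpha)\bigl(\left\|g\right\|_{L^1(\R^3)}+\left\|g\right\|_{L^2(\R^3)}\bigr)$, so the closed-graph detour is unnecessary.
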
 
\begin{proof}
Thanks to Proposition~\ref{l31}, we need to show that
$$\myiiint_{\R^3}\left|f_{A_n}(y)\right|^\alpha dy\rightarrow 0, n\rightarrow\infty.$$
This follows immediately from Proposition \ref{a1} and the continuity of the Lebesgue measure, as
\begin{gather*}
||f_{A_n}||_{L^1(\R^3)} + ||f_{A_n}||_{L^2(\R^3)}\leq   \lambda(A_n) + \lambda(A_n)^{1/2}. \qedhere
\end{gather*}
\end{proof} 
\begin{n}
$\left\{Z^H(A), A \in B_f(\R^3)\right\}$ is a stochastic measure.
\end{n}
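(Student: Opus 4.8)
The plan is to deduce the statement from the two facts already assembled above: $Z^H(\cdot)$ is finitely additive on $\mathcal B_f(\R^3)$ — a consequence of the linearity of the integral with respect to $M$ together with the identity $f_{A\cup B}=f_A+f_B$ for disjoint $A,B$ — and it is continuous at the empty set along decreasing sequences with empty intersection, which is exactly the content of the preceding Proposition. A set function enjoying these two properties is $\sigma$-additive in probability, hence a stochastic measure; so the only thing to check is that the classical measure-theoretic implication ``finite additivity $+$ continuity at $\emptyset$ $\Rightarrow$ $\sigma$-additivity'' goes through verbatim with convergence in probability in place of convergence of reals.

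To carry this out, fix pairwise disjoint $\{A_n,n\ge1\}\subset\mathcal B_f(\R^3)$ with $A:=\bigcup_{n\ge1}A_n\in\mathcal B_f(\R^3)$, and set $B_N:=A\setminus\bigcup_{n=1}^{N}A_n=\bigcup_{n>N}A_n$. Since $B_N\subseteq A$ and $\lambda(A)<\infty$ we have $B_N\in\mathcal B_f(\R^3)$; moreover the sets $B_N$ decrease and $\bigcap_{N\ge1}B_N=\varnothing$. By finite additivity of $Z^H$,
$$Z^H(A)-\sum_{n=1}^{N}Z^H(A_n)=Z^H(A)-Z^H\Big(\bigcup_{n=1}^{N}A_n\Big)=Z^H(B_N)\overset{\mathsf P}{\longrightarrow}0,\qquad N\to\infty,$$
the last convergence being precisely the Proposition applied to the sequence $(B_N)_N$. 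Hence $\sum_{n=1}^{N}Z^H(A_n)\to Z^H(A)$ in probability, which is the required $\sigma$-additivity in probability.

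There is essentially no obstacle here, since all the analytic estimates were already carried out — ultimately in Proposition~\ref{a1}, invoked via Theorem~\ref{1t} and the bound $\|f_{A_n}\|_{L^1(\R^3)}+\|f_{A_n}\|_{L^2(\R^3)}\le\lambda(A_n)+\lambda(A_n)^{1/2}$ — and what remains is the elementary bookkeeping above. The only mild point worth stating explicitly is that the tails $B_N$ remain inside the domain $\mathcal B_f(\R^3)$, which is immediate from $B_N\subseteq A$, so that the Proposition is legitimately applicable to $(B_N)_N$.
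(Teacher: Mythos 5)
Your proposal is correct and follows essentially the same route as the paper, which likewise notes finite additivity of $Z^H$ (from linearity of the integral with respect to $M$) and then reduces $\sigma$-additivity in probability to the continuity at $\varnothing$ established in the preceding Proposition. Your explicit bookkeeping with the tails $B_N=A\setminus\bigcup_{n=1}^N A_n$, together with the observation that $B_N\in\mathcal B_f(\R^3)$, is exactly the step the paper leaves implicit.
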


We turn now to integration with respect to the measure $Z^H$. Since its increments are dependent, the standard integration theory, as described in Section 2, is not available. Nevertheless, we can proceed in a standard way. For a simple function
$$g(x)=\sum_{k=1}^na_kI_{A_k}(x)$$
with $A_k\in B_f(\R^3)$, $k=1,\dots,n$, define
$$I^H(g)=\myiiint_{\R^3}g(x)Z^H(dx)=\sum_{k=1}^na_kZ^H(A).$$
Observe that in this case
\begin{gather*}
I^H(g)=\myiiint_{\R^3}\sum_{k=1}^na_k f_{A_k}(y)M(dy)=\myiiint_{\R^3} A^Hg(y)M(dy),
\end{gather*}
where $A^H$ is given by \eqref{ah}. Then it is natural to define for arbitrary $g$
\begin{equation}\label{ih}
I^H(g)=\myiiint_{\R^3}f(x)Z^H(dx)=\myiiint_{\R^3}A^Hg(y)M(dy).
\end{equation}
Thanks to Proposition~\ref{a1},  this is well defined for any $g \in L^1(\R^3) \cap L^2(\R^3)$. Moreover, the map $I^H$ is continuous in the following sense.
\begin{theo}
Let $\left\{g_n,n\geq 1\right\}\subset L^1(\R^3)\cap L^2(\R^3)$ be such that $g_n\rightarrow g$, $n\rightarrow \infty$, both in $L^1(\R^3)$ and in $L^2(\R^3)$. Then
$$I^H(g_n)\overset{\mathsf{P}}{\longrightarrow }I^H(g), n\rightarrow\infty.$$
\end{theo}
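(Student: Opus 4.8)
The plan is to reduce the claimed convergence in probability to the $L^\alpha$-convergence criterion supplied by Proposition~\ref{l31}, applied to the family of deterministic integrands. By definition \eqref{ih} we have $I^H(g_n) = \myiiint_{\R^3} A^H g_n(y)\,M(dy)$ and $I^H(g) = \myiiint_{\R^3} A^H g(y)\,M(dy)$, so by Proposition~\ref{l31} it suffices to establish
\begin{equation*}
\myiiint_{\R^3} \bigl| A^H g_n(y) - A^H g(y) \bigr|^\alpha\, dy \longrightarrow 0, \quad n\to\infty.
\end{equation*}
Since the map $g \mapsto A^H g$ is linear, $A^H g_n - A^H g = A^H(g_n - g)$, and the quantity above is exactly $\| A^H(g_n-g)\|_{L^\alpha(\R^3)}^\alpha$.

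First I would invoke Proposition~\ref{a1} (the result already used to define $I^H$), which, from the computations appearing in the proof of Theorem~\ref{iz}, bounds $\| A^H h\|_{L^\alpha(\R^3)}$ by a constant times a suitable combination of $\|h\|_{L^1(\R^3)}$ and $\|h\|_{L^2(\R^3)}$ — indeed the proof of the preceding proposition exhibits exactly the estimate $\| A^H\mathbf 1_A\|_{L^\alpha} \le C(\lambda(A) + \lambda(A)^{1/2})$, and more generally Proposition~\ref{a1} gives $\| A^H h\|_{L^\alpha} \le C(\|h\|_{L^1} + \|h\|_{L^2})$ for $h \in L^1(\R^3)\cap L^2(\R^3)$. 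Applying this with $h = g_n - g$ yields
\begin{equation*}
\myiiint_{\R^3} \bigl| A^H g_n(y) - A^H g(y) \bigr|^\alpha\, dy \le C\bigl( \|g_n - g\|_{L^1(\R^3)} + \|g_n - g\|_{L^2(\R^3)} \bigr)^\alpha,
\end{equation*}
and the right-hand side tends to zero by hypothesis. Then Proposition~\ref{l31} immediately gives $I^H(g_n) \overset{\mathsf P}{\longrightarrow} I^H(g)$.

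The only genuine content here is the boundedness estimate for the operator $A^H$ on $L^1\cap L^2$ with values in $L^\alpha$; everything else is linearity plus an appeal to Proposition~\ref{l31}. I expect no real obstacle, since this estimate is precisely what Proposition~\ref{a1} provides and it was already used implicitly both in defining $I^H(g)$ via \eqref{ih} and in the $\sigma$-additivity proof just above. If one wanted a fully self-contained argument one would reprise the spherical-coordinates computation from the proof of Theorem~\ref{iz} (without the regularizing factor $e^{-\varepsilon^2|y|^2/2}$, relying instead on the cut-off $1\wedge |x||y|$ near the origin and on $h\in L^1$ for the tail), but since Proposition~\ref{a1} is available and stated earlier, I would simply cite it. Thus the proof is short: unwind the definition, use linearity, apply the $A^H$-bound, conclude by Proposition~\ref{l31}.
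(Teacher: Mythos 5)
Your proposal is correct and follows essentially the same route as the paper: the paper's own proof is just the remark that by linearity $I^H(g_n)-I^H(g)=I^H(g_n-g)$, and that the convergence then follows from Propositions~\ref{l31} and \ref{a1}. You have simply spelled out the details (the $L^\alpha$-criterion of Proposition~\ref{l31} applied to $A^H(g_n-g)$ together with the bound $\|A^H h\|_{L^\alpha}\leq C\bigl(\|h\|_{L^1}+\|h\|_{L^2}\bigr)$ from Proposition~\ref{a1}), which is exactly what the paper's terse proof leaves implicit.
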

\begin{proof}
From linearity of operator $A^H$ and that of integral w.r.t.\ $M$ we have
$$I^H(g_n)-I^H(g)=I^H(g_n-g).$$
Therefore, it is enough to prove that
$$I^H(g_n-g)\rightarrow 0, n\overset{\mathsf{P}}{\longrightarrow } \infty,$$
which follows from Propositions~\ref{l31} and \ref{a1}.
\end{proof}
In particular, the integral $I^H(f) = \int_{\R^3} f(x)Z^H(dx)$ may be equivalently defined as a limit in probability of integrals of simple functions approximating $f$ in $L^1(\R^3)\cap L^2(\R^2)$. Another important observation is that our definition agrees with that given in \cite{pryhara-shevchenko3}. 
\begin{theo}\label{thm:agreementofdefs}
For any $f\in L^1(\R^3)\cap  L^2(\R^3)$, the convergence in probability \eqref{lim2} holds.
\end{theo}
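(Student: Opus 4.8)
The plan is to identify the right-hand side of \eqref{lim2} with an integral against $M$ and match it with \eqref{ih}. When $f$ is a simple function, \eqref{lim2} follows at once from Theorem~\ref{iz} and linearity, so the content is the extension to an arbitrary $f\in L^1(\R^3)\cap L^2(\R^3)$, which I would carry out in two steps.

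\emph{Step 1: a Fubini identity.} Recalling from the proof of Theorem~\ref{iz} that $\frac{\partial^3}{\partial x_1\partial x_2\partial x_3}Z^{H,\varepsilon}(x)=\myiiint_{\R^3}g^\varepsilon(x,y)\,M(dy)$ with $g^\varepsilon(x,y)=\sin(x,y)\,e^{-\varepsilon^2|y|^2/2}\prod_{l=1}^3\frac{\sign y_l}{|y_l|^{H+1/\alpha-1}}$, I would prove
$$\myiiint_{\R^3}f(x)\,\frac{\partial^3}{\partial x_1\partial x_2\partial x_3}Z^{H,\varepsilon}(x)\,dx=\myiiint_{\R^3}e^{-\varepsilon^2|y|^2/2}\,(A^Hf)(y)\,M(dy)$$
by applying Theorem~\ref{tv32t41} with $\mathbf{T}=\R^3$ and the kernel $(x,y)\mapsto f(x)\,g^\varepsilon(x,y)$ (outer variable $x$, $M$-variable $y$): the left side equals $\myiiint_{\R^3}f(x)\int_{\R^3}g^\varepsilon(x,y)M(dy)\,dx$, and $\myiiint_{\R^3}f(x)g^\varepsilon(x,y)\,dx=e^{-\varepsilon^2|y|^2/2}(A^Hf)(y)$ because $A^H$ sends $\mathbf{1}_A$ to $f_A$ and extends linearly (cf.\ \eqref{ah}). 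The sufficient condition of Theorem~\ref{tv32t41} then becomes $\myiiint_{\R^3}|f(x)|\big(\myiiint_{\R^3}|g^\varepsilon(x,y)|^\alpha dy\big)^{1/\alpha}dx<\infty$, which I would check by repeating the estimate from the proof of Theorem~\ref{iz}: using $|\sin(x,y)|\le 1\wedge|x||y|$ and the spherical change of variables \eqref{sfz}, $\myiiint_{\R^3}|g^\varepsilon(x,y)|^\alpha dy\le C\int_0^\infty(1\wedge|x|\rho)^\alpha e^{-\alpha\varepsilon^2\rho^2/2}\rho^{3\alpha(1-H)-1}d\rho$, and splitting the last integral at $\rho=1/|x|$ gives the bound $C(|x|^{-3\alpha(1-H)}+C_\varepsilon)$ with $C_\varepsilon$ depending only on $\varepsilon$. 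Hence $\big(\myiiint_{\R^3}|g^\varepsilon(x,y)|^\alpha dy\big)^{1/\alpha}\le C(|x|^{-3(1-H)}+C_\varepsilon^{1/\alpha})$; the constant part and the region $|x|>1$ give a finite contribution because $f\in L^1(\R^3)$, whereas on $|x|\le1$ the Cauchy--Schwarz inequality yields $\myiiint_{|x|\le1}|f(x)|\,|x|^{-3(1-H)}dx\le\|f\|_{L^2(\R^3)}\big(\myiiint_{|x|\le1}|x|^{-6(1-H)}dx\big)^{1/2}$, and the last integral converges precisely because $6(1-H)<3$, i.e. $H>1/2$.

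\emph{Step 2: passing to the limit.} Here I would use that $A^Hf\in L^\alpha(\R^3)$ by Proposition~\ref{a1}: the integrand $\big|e^{-\varepsilon^2|y|^2/2}-1\big|^\alpha|(A^Hf)(y)|^\alpha$ is dominated by $|(A^Hf)(y)|^\alpha\in L^1(\R^3)$ and tends to $0$ pointwise, so the dominated convergence theorem gives $\myiiint_{\R^3}\big|e^{-\varepsilon^2|y|^2/2}(A^Hf)(y)-(A^Hf)(y)\big|^\alpha dy\to0$ as $\varepsilon\to0+$, and by Proposition~\ref{l31} this turns into $\myiiint_{\R^3}e^{-\varepsilon^2|y|^2/2}(A^Hf)(y)M(dy)\overset{\mathsf{P}}{\longrightarrow}\myiiint_{\R^3}(A^Hf)(y)M(dy)=I^H(f)$. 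Together with Step~1 this is exactly \eqref{lim2}.

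The only step I expect to require care is the integrability bound in Step~1: $\|g^\varepsilon(x,\cdot)\|_{L^\alpha(\R^3)}$ has a genuine singularity of order $|x|^{-3(1-H)}$ as $x\to0$, and it is the combination $f\in L^2(\R^3)$ together with the assumption $H>1/2$ that renders it integrable against $|f|$; everything else is routine bookkeeping.
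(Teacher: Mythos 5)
Your proof is correct and follows essentially the same route as the paper, whose proof of Theorem~\ref{thm:agreementofdefs} simply repeats the argument of Theorem~\ref{iz} with $f$ in place of $\mathbf{1}_A$: a Fubini exchange justified by Theorem~\ref{tv32t41}, followed by dominated convergence and Proposition~\ref{l31}. One minor remark: the integrability check in your Step~1 is simpler than you make it --- bounding $|\sin(x,y)|\le 1$ already gives $\bigl(\myiiint_{\R^3}|g^\varepsilon(x,y)|^\alpha dy\bigr)^{1/\alpha}\le C_\varepsilon$ uniformly in $x$ (the Gaussian factor controls large $|y|$ and $H<1$ controls the origin), so $f\in L^1(\R^3)$ alone suffices and there is no genuine singularity at $x=0$; the assumptions $f\in L^2(\R^3)$ and $H>1/2$ are needed only through Proposition~\ref{a1}, to ensure $A^Hf\in L^\alpha(\R^3)$ in Step~2.
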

\begin{proof}
The proof is the same as in Theorem~\ref{iz}.
\end{proof}

\section{Wave equation with coloured stable noise}
Let us return to the wave equation with coloured $S\alpha S$ noise:
\begin{equation}\label{eqR31}
\left\{
\begin{aligned}
\Big(&\frac{\partial ^{2}}{\partial t^2}- a^2 \Delta\Big)U\left(x,t\right)=\dot Z^H(x),\ x\in\R^3, t>0,\\
&U(x,0)=0, \\
&\frac{\partial U}{\partial t}(x,0)= 0.
\end{aligned}
\right.
\end{equation}
This equation was already studied in \cite{pryhara-shevchenko3}, wherein we proved that its candidate solution given by Kirchhoff's formula
\begin{equation*}
\begin{gathered}
U(x,t)=\frac{1}{4\pi a}\myiiint_{y:\left|x-y\right|<at}\frac{1}{\left|x-y\right|}Z^H(dy)
\end{gathered}
\end{equation*}
is a weak (generalized) solution, that is for any $\theta (x,t)\in C_{fin}^{\infty}(\R^3\times\R^{+})$ almost surely it holds that
\begin{equation*} 
\int_0^{\infty}\myiiint_{\R^3}U(x,t)\left(\frac{\partial^2}{\partial t^2}\theta (x,t)-a^2\Delta\theta (x,t)\right)dx\,dt=\int_0^{\infty}\myiiint_{\R^3}\theta (x,t)Z^H(dx)dt.
	\end{equation*}
In \cite{pryhara-shevchenko3}, the integral in \eqref{eqR31} was understood in the sense \eqref{lim2}. Nevertheless, thanks to Theorem~\ref{thm:agreementofdefs}, this agrees with the definition \eqref{ih} taken in the present paper. Indeed, defining
$$f_{t,x}(y)=\frac{1}{4\pi a\left|x-y\right|}I_{\left|x-y\right|<at}$$
so that
$$U(x,t)=\myiiint_{\R^3}f_{x,t}(y)Z^H(dy),$$
we have $f_{t,x}\in L^1(\R^3)\cap L^2(\R^3)$.
As a result, 
\begin{gather*}
U(x,t)=\myiiint_{\R^3}A^H f_{x,t}(y) M(dy)\\
=\frac{1}{a}\myiiint_{\R^3}\sin(x,y)\prod_{l=1}^{3}\frac{\sign y_l}{\left|y_l\right|^{H+1/\alpha-1}}\frac{1-\cos at\left|y\right|}{\left|y\right|^2}M(dy)
\end{gather*}
(the expression for $A_H f_{x,t}(y)$ is computed in \cite[p. 151]{pryhara-shevchenko3}).

\section{Sample properties of solution to wave equation with coloured stable noise}

Write the random field $U(x,t)$ as its LePage series: 
\begin{equation*}
U(x,t)=\frac{C_\alpha}{a}\sum_{k=1}^\infty\Gamma_k^{-1/\alpha}\varphi(\xi_k)^{-1/\alpha}\prod_{l=1}^3\frac{\sign \xi_{k,l}}{\left|\xi_{k,l}\right|^{H+1/\alpha-1}}\sin\left(x,\xi_k\right)\frac{1-\cos at\left|\xi_k\right|}{\left|\xi_k\right|^2}g_k.
\end{equation*}
For notational simplicity assume that the underlying probability space has the following structure:
$$\left(\Omega, \mathcal F, \mathsf{P}\right)=\left(\Omega_{\Gamma}\times \Omega_{\xi} \times \Omega_{g}, \mathcal F_{\Gamma} \otimes \mathcal F_{\xi} \otimes \mathcal F_{g}, \mathsf{P}_{\Gamma}\otimes \mathsf{P}_{\xi}\otimes \mathsf{P}_{g} \right),$$
and for all $\omega= \left(\omega_{\Gamma},\omega_{\xi},  \omega{g}\right)$, $k\geq 1 : \Gamma_{k}(\omega)=\Gamma_{k}(\omega_{\Gamma}), \xi_{k}(\omega)=\xi_{k}(\omega_{\xi}),$ $g_{k}(\omega)=g_{k}(\omega_{g})$.
\begin{theo}
1. The random field $U$ has a modification, which is $\gamma$-H\"older continuous in $t$ and locally in $X$ for any $\gamma \in (0,(3H-1)\wedge 1)$. Moreover,  for any $\delta>0$ this modification satisfies
\begin{gather*}
\sup_{\substack{
\left|x'\right|, \left|x''\right|\leq R,\
t',t'' \in [0,T] \\
\left|x'-x''\right|\le h,
\left|t'-t''\right|\le h}}\left|U(x',t')-U(x'',t'')\right|\leq C(\omega)h^{(3H-1)\wedge 1}\left|\log h\right|^{3/\alpha-1/2+\delta}
\end{gather*}
for all $R>0$ and all $h>0$ small enough.

2. If additionally $H \in \left(2/3, 1\right),$ then this modification is absolutely continuous in each variable.
\end{theo}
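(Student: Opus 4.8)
The plan is to exploit the conditionally Gaussian structure of the LePage series. Write $z=(x,t)$, $\psi_z(y)=\sin(x,y)\,\dfrac{1-\cos at|y|}{|y|^{2}}$, so that $U(z)=\frac{C_\alpha}{a}\sum_{k\ge1}\Gamma_k^{-1/\alpha}\varphi(\xi_k)^{-1/\alpha}\prod_{l=1}^{3}\frac{\sign\xi_{k,l}}{|\xi_{k,l}|^{H+1/\alpha-1}}\psi_z(\xi_k)g_k$. Since $\Expect[\Gamma_1^{-2/\alpha}]=\infty$ while $\Expect[\Gamma_k^{-2/\alpha}]\asymp k^{-2/\alpha}$ for $k\ge2$, I would first peel off the first term: $U=U_1+V$. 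Conditionally on $(\omega_\Gamma,\omega_\xi)$ the field $U_1$ is a fixed smooth function of $z$ times an a.s.\ finite random factor, hence Lipschitz in $z$ on compacts with an a.s.\ finite random constant, which is harmless for both parts, so it remains to treat $V$. Conditionally on $(\omega_\Gamma,\omega_\xi)$, $V$ is centered Gaussian with incremental variance
\[
\sigma^{2}(z',z'')=\frac{C_\alpha^{2}\Expect_g|g_1|^{2}}{a^{2}}\sum_{k\ge2}\Gamma_k^{-2/\alpha}\varphi(\xi_k)^{-2/\alpha}\prod_{l=1}^{3}|\xi_{k,l}|^{-2(H+1/\alpha-1)}\big|\psi_{z'}(\xi_k)-\psi_{z''}(\xi_k)\big|^{2}.
\]

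The deterministic core is the increment estimate, valid for $|x'|,|x''|\le R$ and $t',t''\in[0,T]$:
\[
\sup_{|x'-x''|\le h,\ |t'-t''|\le h}\big|\psi_{z'}(y)-\psi_{z''}(y)\big|\le C(R,T)\Big(\frac{h}{1\vee|y|}\wedge\frac{1}{|y|^{2}}\Big),
\]
which I would obtain by interpolating $|\sin(x',y)-\sin(x'',y)|\le(|x'-x''|\,|y|)\wedge2$, $\big|\tfrac{1-\cos at|y|}{|y|^{2}}\big|\le C(T)\wedge\tfrac{2}{|y|^{2}}$ and $\big|\tfrac{1-\cos at'|y|}{|y|^{2}}-\tfrac{1-\cos at''|y|}{|y|^{2}}\big|\le\tfrac{a|t'-t''|}{|y|}\wedge\tfrac{2}{|y|^{2}}$. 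Inserting it into $\sigma^{2}$, using independence of $\xi_k$ and $\Gamma_k$, $\sum_{k\ge2}\Expect[\Gamma_k^{-2/\alpha}]<\infty$ and $\Expect_\xi[\varphi(\xi)^{-2/\alpha}F(\xi)]=\int_{\R^{3}}\varphi(y)^{1-2/\alpha}F(y)\,dy$, I am led to
\[
\Expect\Big[\sup_{|x'-x''|\le h,\ |t'-t''|\le h}\sigma^{2}(z',z'')\Big]\le C(R,T)\int_{\R^{3}}\varphi(y)^{1-2/\alpha}\prod_{l=1}^{3}|y_l|^{-2(H+1/\alpha-1)}\Big(\frac{h}{1\vee|y|}\wedge\frac{1}{|y|^{2}}\Big)^{2}dy.
\]
Here $\varphi(y)^{1-2/\alpha}\prod_{l}|y_l|^{-2(H+1/\alpha-1)}$ equals $\prod_{l}|y_l|^{1-2H}$ times bounded powers of $|\log|y_l||$; passing to spherical coordinates (radial exponent $3-6H$ on $1\le|y|\le1/h$ and $1-6H$ on $|y|\ge1/h$, both integrals finite thanks to $1/2<H<1$) bounds the right-hand side by $C(R,T)h^{2((3H-1)\wedge1)}(\log(1/h))^{c}$ for small $h$, where $c=3(1+\eta)(2/\alpha-1)$ is, since $M$ and hence $U$ may be realized with any admissible $\varphi$ (in particular with $\eta$ as small as we please), arbitrarily close to $3(2/\alpha-1)$; the exponent saturates at $1$ when $H\ge2/3$ because the intermediate-zone integral then converges, whence the truncation $\wedge1$. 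As the left-hand side is nondecreasing in $h$, a Borel--Cantelli argument along $h=2^{-n}$ (one event per scale, so Markov's inequality suffices despite the heavy tails) upgrades this to the almost sure bound $\sup_{|x'-x''|\le h,\ |t'-t''|\le h}\sigma(z',z'')\le C(\omega)h^{(3H-1)\wedge1}(\log(1/h))^{(c+1)/2+\delta'}$ for every $\delta'>0$ and all small $h$.

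On the full-measure event so obtained I would then apply the multidimensional Garsia--Rodemich--Rumsey inequality to the conditionally Gaussian field $V$ on $\{|x|\le R\}\times[0,T]$, with Young function $\Psi(u)=e^{u^{2}}-1$ and control metric $p(s)=s^{(3H-1)\wedge1}(\log(1/s))^{(c+1)/2+\delta'}$; because $\Psi^{-1}(v)\asymp\sqrt{\log v}$, this furnishes a continuous modification of $V$ with modulus of continuity $C(\omega)h^{(3H-1)\wedge1}(\log(1/h))^{(c+1)/2+\delta'+1/2}$. Combining with the $O(h)$ increments of $U_1$ and simplifying the logarithmic exponent — equal to $\tfrac{c}{2}+1+\delta'$, which can be made arbitrarily close to $3/\alpha-1/2$ — gives part~1 with the bound $h^{(3H-1)\wedge1}|\log h|^{3/\alpha-1/2+\delta}$; H\"older continuity of every order $\gamma<(3H-1)\wedge1$ follows at once.

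For part~2 let $H\in(2/3,1)$. A computation of the type carried out in the proof of Theorem~\ref{iz} shows that the formal derivatives $\partial_th_{x,t}$ and $\partial_{x_l}h_{x,t}$ of the kernel $h_{x,t}(y)=\tfrac1a\sin(x,y)\prod_{l}\tfrac{\sign y_l}{|y_l|^{H+1/\alpha-1}}\tfrac{1-\cos at|y|}{|y|^{2}}$ belong to $L^{\alpha}(\R^{3})$ uniformly for $(x,t)$ in compacts — the critical radial exponent $2\alpha-3\alpha H-1$ is $<-1$ precisely when $H>2/3$ — so the stable integrals $\partial_tU(x,t):=I(\partial_th_{x,t})$ and $\partial_{x_l}U(x,t):=I(\partial_{x_l}h_{x,t})$ are well defined. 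Since $h_{x,t}(y)=\int_{0}^{t}\partial_sh_{x,s}(y)\,ds$ pointwise, and likewise $h_{x,t}$ is recovered from its value at $x_l=0$ by integrating $\partial_{x_l}h_{x,t}$, the stochastic Fubini theorem (Theorem~\ref{tv32t41}, whose sufficient condition is exactly the uniform $L^{\alpha}$-bound just mentioned) gives $U(x,t)=\int_{0}^{t}\partial_sU(x,s)\,ds$ and the analogous identities in $x_1,x_2,x_3$; thus $U$ is an indefinite integral in each of its variables, hence absolutely continuous, in accordance with the modification of part~1. The principal obstacle throughout is the sharp deterministic increment bound for $\psi_{z'}-\psi_{z''}$ and the ensuing anisotropic radial integration, and — most delicately — passing from the expectation estimate on $\sigma^{2}$ to an almost sure one uniform over all small scales in the presence of heavy tails (which forces peeling off $U_1$ and leaves only the crude Borel--Cantelli step, responsible for the extra unit in the power of the logarithm); fitting together these three sources of logarithms with the $\sqrt{\log}$ gained in the Gaussian chaining so as to land exactly on $3/\alpha-1/2+\delta$ is the main bookkeeping burden.
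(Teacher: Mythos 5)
Your proposal is correct and follows essentially the same route as the paper: conditionally on the LePage randomness the field is Gaussian, the conditional incremental variance is bounded by the same elementary kernel estimates and the same spherical-coordinate radial integration giving $h^{(6H-2)\wedge 2}$ with logarithmic corrections, an almost sure upgrade along dyadic scales costs one extra power of the logarithm, the Gaussian modulus-of-continuity step supplies the final $\sqrt{|\log h|}$, and part~2 is the paper's argument (differentiate the kernel, check the $L^\alpha$ condition $2\alpha-3\alpha H-1<-1$, which is exactly $H>2/3$, and apply the stochastic Fubini theorem). The only real divergences are bookkeeping devices: you peel off the first LePage term (whose weight $\Gamma_1^{-2/\alpha}$ has infinite mean) so that you can take full expectation over $(\Gamma,\xi)$ and then use Markov plus Borel--Cantelli, whereas the paper never integrates over $\Gamma$ at all --- it conditions on $\omega_\Gamma$, exploits the a.s.\ convergence of $\sum_k\Gamma_k^{-2/\alpha}$ and takes only $\Expect_\xi$ --- and you invoke Garsia--Rodemich--Rumsey where the paper cites a ready-made Gaussian modulus result; both implementations land on the same bound $h^{(3H-1)\wedge 1}|\log h|^{3/\alpha-1/2+\delta}$.
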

\begin{proof}
1. Estimate
\begin{gather*}
\Expect_g\left[\left|U(x',t')-U(x'',t'')\right|^2\right]\\
\leq 2 \left(\Expect_g\left[\left|U(x',t')-U(x'',t')\right|^2\right]+\Expect_g\left[\left|U(x'',t')-U(x'',t'')\right|^2\right]\right)
\end{gather*}
Let $h \in \left(0,{1}/{2}\right)$. Define
\begin{gather*}
a_1(h)=\sup_{\substack{
t \in [0,T], x',x''\in\R^3 \\
\left|x'-x''\right|\le h
}}\Expect_g\left[\left|U(x',t)-U(x'',t)\right|^2\right],\\
a_2(h)=\sup_{\substack{
\left|x\right|\leq R,\ t',t'' \in [0,T]\\ \left|t'-t''\right|\leq h 
}}\Expect_g\left[\left|U(x,t')-U(x,t'')\right|^2\right].
\end{gather*}
Using the LePage representation, estimate
\begin{gather*}
\Expect_g\left[\left|U(x',t)-U(x'',t)\right|^2\right]\\
\leq a^{-2} C_\alpha^2 \sum_{k=1}^\infty\Gamma_k^{-2/\alpha}\varphi(\xi_k)^{-2/\alpha}\frac{\left|1-\cos at\left|\xi_k\right|\right|^2}{\left|\xi_k\right|^4} \big|\sin (x',\xi_k) - \sin(x'',\xi_k)\big|^2\times\\
\times\prod_{l=1}^3\frac{1}{\left|\xi_{k,l}\right|^{2H+2/\alpha-2}}.
\end{gather*}
From simple inequalities
$$\frac{\left|1-\cos at\left|y\right|\right|}{\left|y\right|^2}\leq 1\wedge|y|^{-2},\ \big|\sin (x',\xi_k) - \sin(x'',\xi_k)\big|\le 2\wedge \big(|y|\cdot |x'-x''|\big)$$
we get the following estimate:
\begin{equation}\label{e}
\begin{gathered}
a_1(h)\leq a^{-2} C_\alpha^2 \sum_{k=1}^\infty\Gamma_k^{-2/\alpha}\varphi(\xi_k)^{-2/\alpha} \big(1\wedge\left|\xi_k\right|^{-4}\big)\big(4 \wedge (|\xi_k|^2 h^2)  \big)\\\times \prod_{l=1}^3\frac{1}{\left|\xi_{k,l}\right|^{2H+2/\alpha-2}}
:= a^{-2} C_\alpha^2\sum_{k=1}^\infty\Gamma_k^{-2/\alpha}  Q\left(h,\xi_k\right).
\end{gathered}
\end{equation}
Consider
\begin{gather*}
\Expect_\xi\left[Q\left(h,\xi_k\right)\right]=\myiiint_{\R^3}\varphi(y)^{1-2/\alpha} \big(1\wedge\left|y\right|^{-4}\big) \big(4\wedge (|y|^2 h^2)\big)\prod_{l=1}^3\frac{1}{\left|y_{l}\right|^{2H+2/\alpha-2}}dy\\
=K^{3-6/\alpha}\myiiint_{\R^3}\big(1\wedge\left|y\right|^{-4}\big) \big(4\wedge (|y|^2 h^2)\big)\prod_{l=1}^3\frac{dy}{\left|y_l\right|^{2H-1}\left(\left|\log\left|y_l\right|\right|+1\right)^{(1+\eta)(1-2/\alpha)}}. 
\end{gather*}
Now we make the spherical change of variables \eqref{sfz} and estimate the logarithms as
\begin{equation*} 
\begin{gathered}
\big(\big|\log \left|\rho a(\nu,\theta)\right|\big|+1\big)^d\leq \bigl(\left|\log \rho\right|+ \bigl|\log \left|a(\nu,\theta)\right|\bigr|+1\bigr)^d\\
\leq \left(\left|\log \rho\right|+1\right)^d\bigl(\bigl|\log \left|a(\nu,\theta)\right|\bigr|+1\bigr)^d,\\
\end{gathered}
\end{equation*}
where $d=(1+\eta)(2/\alpha-1)$, and $a(\nu,\theta)$ is one of the functions $\sin\theta\cos\nu$, $\sin\theta\sin\nu$, $\cos\theta$. Thus we get
\begin{gather*}
\Expect_{\xi}\left[Q(h,\xi_k)\right]\leq C \int_0^\infty\frac{\left(1\wedge \rho^{-4}\right)\left(\rho^2h^2\wedge 1\right)\left(\left|\log \rho\right|+1\right)^{3d}}{\rho^{6H-5}}\\ \times\int_0^{2\pi}\int_0^\pi\bigl(\bigl|\log\left|\sin\theta\cos\nu\right|\bigr|+1\bigr)^d\bigl(\bigl|\log\left|\sin\theta\sin\nu\right|\bigr|+1\bigr)^d
\\\times  \bigl(\bigl|\log\left|\cos\theta\right|\bigr|+1\bigr)^d\left|\sin\theta\right|d\theta \, d\nu \, d\rho \\
\leq C \int_0^\infty\frac{\left(1\wedge \rho^{-4}\right)\left(\rho^2h^2\wedge 1\right)\left(\left|\log \rho\right|+1\right)^{3d}}{\rho^{6H-5}}d\rho\\
= C\bigg(h^2\int_0^1\rho^{7-6H}\left(\left|\log \rho\right|+1\right)^{3d} d\rho
+\int_1^{h^{-1}}h^2\rho^{3-6H}\left(\left|\log \rho\right|+1\right)^{3d}d\rho \\+\int_{h^{-1}}^\infty\rho^{1-6H}\left(\left|\log \rho\right|+1\right)^{3d} d\rho\bigg):=C\left(I_1+I_2+I_3\right).
\end{gather*}
Let us estimate the integrals individually. Obviously, $I_1 \leq C h^2.$
Further, for $\rho\in \left[1,h^{-1}\right]$, $\left(\left|\log \rho\right|+1\right)^{3d}\leq \left(\left|\log h\right|+1\right)^{3d}$, 
whence 
\begin{gather*}
I_2 \leq C \left(\left|\log h\right|+1\right)^{3d}\int_1^{1/h}\rho^{3-6H} d\rho \leq 
C h^{(6H-2)}\left(\left|\log h\right|+1\right)^{3d}.
\end{gather*}
Finally,
\begin{gather*}
I_3=h^{-1}\int_{1}^\infty \left(\frac{\rho}{h}\right)^{1-6H}\left(\log \frac{\rho}{h}+1\right)^{3d}d\rho\\
\leq h^{6H-2}\int_1^\infty \rho^{1-6H}\left(\log \rho+\left|\log h\right|+1\right)^{3d} d\rho\\
\leq h^{6H-2}\left(\left|\log h\right|+1\right)^{3d}\int_1^\infty \rho^{1-6H}\left(\log \rho+1\right)^{3d} d\rho\leq C h^{6H-2}\left(\left|\log h\right|+1\right)^{3d}.
\end{gather*}
Collecting the estimates, we get that
\begin{equation}\label{exp}
\Expect_\xi\left[ Q(h,\xi_k)\right]\leq Ch^{(6H-2)\wedge 2}\left(\left|\log h\right|+1\right)^{3d}.
\end{equation}
Define $b(h)=h^{(6H-2)\wedge 2}\left|\log h\right|^{3d}$, $h \in(0,1/2)$. Taking into account \eqref{e}, \eqref{exp} and the almost sure convergence of the series $\sum_{k=1}^\infty\Gamma_k^{-2/\alpha}$ we get that for any $\varepsilon>0$
$$\Expect_\xi \left[\sum_{n=1}^\infty\frac{a_1\left(2^{-n}\right)}{b\left(2^{-n}\right)n^{1+\varepsilon}}\right]<\infty$$
$\mathsf{P}_\Gamma$-almost surely.
Hence  $\sum_{n=1}^\infty\frac{a_1\left(2^{-n}\right)}{b\left(2^{-n}\right)n^{1+\varepsilon}}<\infty$ \ $\mathsf{P}_\xi\otimes \mathsf{P}_\Gamma$-almost surely. It follows in particular that 
\begin{equation}\label{eq}
\frac{a_1\left(2^{-n}\right)}{b\left(2^{-n}\right)n^{1+\varepsilon}} \rightarrow 0, n\rightarrow \infty,
\end{equation}
$\mathsf{P}_\xi\otimes \mathsf{P}_\Gamma$-almost surely. 
Note that, for positive $h$ small enough, $b(h)$ increases and satisfies $b(2h)\le C b(h)$. Consequently, \eqref{eq} implies that for almost all $(\omega_\xi,\omega_\Gamma) \in\Omega_\xi\times\Omega_\Gamma$
$$a_1(h)\leq C(\omega_\xi,\omega_\Gamma) h^{(6H-2)\wedge 2}\left|\log h\right|^{3d+1+\varepsilon}$$
holds for all sufficiently small $h>0$.

In order to estimate $a_2(h)$, use the following inequality:
\begin{gather*}
\big|\left(1-\cos at'\left|y\right|\right)-\left(1-\cos at''\left|y\right|\right)\big|\leq 2\wedge \big(a\left|y\right||t'-t''|\big).
\end{gather*}
Then 
\begin{gather*}
a_2(h)\leq a^{-2} C_\alpha^2 \sum_{k=1}^\infty\Gamma_k^{-2/\alpha}\varphi(\xi_k)^{-2/\alpha}\frac{4 \wedge \big(a^2\left|\xi_k\right|^2h^2\big)}{\left|\xi_k\right|^4}\\
\times\big(1\wedge (|x|^2|\xi_k|^2)\big)\prod_{l=1}^3\frac{1}{\left|\xi_{k,l}\right|^{2H+2/\alpha-2}}.
\end{gather*}
Using the same reasoning as above, we get that for almost all $(\omega_\xi,\omega_\Gamma) \in \Omega_\xi\times\Omega_\Gamma$
$$a_2(h)\leq C (\omega_\xi,\omega_\Gamma)h^{(6H-2)\wedge 2}\left|\log h\right|^{3d+1+\varepsilon}.$$
As a result, we get for almost all $\omega_\xi \in \Omega_\xi,\omega_\Gamma \in \Omega_\Gamma$
\begin{gather*}
\sup_{A_{R,h}}\Expect_g\left[\left|U(x',t')-U(x'',t'')\right|^2\right]\leq C(\omega_\xi,\omega_\Gamma)h^{(6H-2)\wedge 2}\left|\log h\right|^{3d+1+\varepsilon},
\end{gather*}
for all $h$ small enough, 
where $$A_{R,h} = \big\{(x',x'',t',t'')\in\R^6\times[0,T]^2: |x'|, |x''|\leq R,\
|x'-x''|\le h,
|t'-t''|\le h\big\}.$$
For a fixed $(\omega_\xi,\omega_\Gamma) \in \Omega_\xi\times\Omega_\Gamma$,  $U$ has a centered Gaussian distribution. Therefore (see e.g.\ \cite[p.~220]{lifshits}) there is a modification of $U$ satisfying
\begin{gather*}
\sup_{A_{R,h}} 
\left|U(x',t')-U(x'',t'')\right|\leq C(\omega)h^{(3H-1)\wedge 1}\left|\log h\right|^{3d/2+1+\varepsilon/2}.
\end{gather*}
Recalling that $d = (1+\eta)(2/\alpha -1)$, we get the required statement by choosing $\varepsilon$ and $\eta$ sufficiently small.

2. Assuming that the continuous modification of $U$ is chosen, define 
$$g(x,y,t)=\frac{\partial}{\partial t}A^Hf_{x,t}=\sin\left(x,y\right)\frac{\sin at\left|y\right|}{\left|y\right|^2}\left|y\right|\,\prod_{l=1}^3\frac{\sign y_{l}}{\left|y_{l}\right|^{H+1/\alpha-1}}$$
and 
$$V(x,t)=\myiiint_{\R^3}g(x,y,t)M(dy)=\myiiint_{\R^3}\sin\left(x,y\right)\frac{\sin at\left|y\right|}{\left|y\right|}\, \prod_{l=1}^3\frac{\sign y_{l}}{\left|y_{l}\right|^{H+1/\alpha-1}}M(dy).$$
With the help of spherical change of variables \eqref{sfz}, estimate
\begin{gather*}
\myiiint_{\R^3}\left|g(x,y,t)\right|^\alpha dy\leq \myiiint_{\R^3}\left(\frac{\left(1\wedge \left|x\right|\left|y\right|\right)\left(1\wedge \left|a\right|\left|t\right|\left|y\right|\right)}{|y| \prod_{l=1}^3\left|y_{l}\right|^{H+1/\alpha-1}\left|y\right|}\right)^\alpha dy\\
=\int_0^\infty\int_0^\pi\int_0^{2\pi}\frac{\left(1\wedge \left|x\right|\rho\right)^\alpha\left(1\wedge \left|a\right|\left|t\right|\rho\right)^\alpha\rho^2\sin\theta}{\rho^{3\alpha H+3-2\alpha}\left|\sin\theta\right|^{2\alpha H+2-2\alpha}}\\
\times\left(\cos\varphi\sin\varphi\cos\theta\right)^{\alpha-1-\alpha H}d\theta\, d\varphi\, d\rho\\
=\int_0^\infty\int_0^\pi\int_0^{2\pi}\left(1\wedge \left|x\right|^\alpha\rho^\alpha\right)\left(1\wedge \left|a\right|^\alpha\left|t\right|^\alpha\rho^\alpha\right)
\\
\times\left(\cos\varphi\sin\varphi\cos\theta\right)^{\alpha-1-\alpha H}\left(\sin\theta\right)^{2\alpha-2\alpha H-1}\rho^{2\alpha-3\alpha H-1}d\theta\, d\varphi\, d\rho\\
\leq C \int_0^\infty \left(1\wedge \left|x\right|^\alpha\rho^\alpha\right)\left(1\wedge \left|a\right|^\alpha\left|t\right|^\alpha\rho^\alpha\right)\rho^{2\alpha-3\alpha H-1}d\rho<\infty;
\end{gather*}
the integral is convergent since $4\alpha-3\alpha H -1>-1$ and  $2\alpha-3\alpha H-1<-1$ for $\alpha\in (1,2)$ and  $H\in ({2}/{3},1)$.
As a result,
$$\int_0^t\left(\myiiint_{\R^3}\left|g(x,y,s)\right|^\alpha dy\right)^{1/\alpha}ds<\infty,$$
so by Theorem \ref{tv32t41}
$$\int_0^t V(x,s)ds=\myiiint_{\R^3}\int_0^tg(x,y,s)ds\, M(dy)=\myiiint_{\R^3}A^H f_{x,t}(y)M(dy)=U(x,t)$$
almost surely. Since both sides of the last equality are continuous, and the left-hand side is absolutely continuous, then we get that, almost surely, $U$ is absolutely continuous in $t$. The absolute continuity in other variables is shown similarly. 
\end{proof}

\appendix
\section{Definition and properties of  $A^H$}

Let $g\in L^1(\R^3)\cap L^2(\R^3)$. Define
\begin{equation}\label{ah}
A^{H}g(y)=\Real\widehat{g}(y)\prod_{l=1}^3\frac{\sign y_l }{\left|y_l\right|^{H+1/\alpha-1}},
\end{equation}
where $\widehat{g}(y)=\myiiint_{\R^3}e^{i(x,y)}g(x)dx$ is the Fourier transform of $g$.
\begin{tv}\label{a1}
For any $\alpha \in (0,2)$, $H\in(1/2,1)$, there exists a constant $C(H,\alpha)$ such that for any $g \in L^1(\R^3)\cap L^2(\R^3)$, 
$$\left\|A^{H}g\right\|_{L^\alpha(\R^3)}\leq C(H,\alpha) \bigl(\left\|g\right\|_{L^1(\R^3)} + \left\|g\right\|_{L^2(\R^3)}\bigr).$$
\end{tv}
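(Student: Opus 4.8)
The plan is to reduce everything to the pointwise bound $|A^{H}g(y)|\le|\widehat{g}(y)|\prod_{l=1}^3|y_l|^{-\beta}$ with $\beta=H+1/\alpha-1$, and then estimate the resulting integral using Plancherel's theorem and the elementary bound $\|\widehat{g}\|_\infty\le\|g\|_{L^1}$. First I would record the arithmetic facts that the hypotheses $\alpha\in(0,2)$, $H\in(1/2,1)$ supply: one has $\beta>0$, $\alpha\beta=\alpha H+1-\alpha\in(0,1)$, and the \emph{strict} inequality $\frac{2\alpha\beta}{2-\alpha}>1$, which is equivalent to $H>1/2$. Raising to the power $\alpha$ and dropping $\Real$ gives
\[
\|A^{H}g\|_{L^\alpha(\R^3)}^\alpha\le\int_{\R^3}|\widehat{g}(y)|^\alpha\prod_{l=1}^3|y_l|^{-\alpha\beta}\,dy,
\]
and I would split the domain of integration into the unit ball $B=\{|y|\le1\}$ and its complement.

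On $B$ I would use only $|\widehat{g}(y)|\le\|g\|_{L^1}$; since $\alpha\beta<1$ the weight is integrable over $B$, in fact $\int_B\prod_l|y_l|^{-\alpha\beta}\,dy\le\bigl(\int_{-1}^1|t|^{-\alpha\beta}\,dt\bigr)^3<\infty$, so this contribution is at most $C\|g\|_{L^1}^\alpha$.

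The real work is the integral over $\{|y|>1\}$, and the obstacle is the anisotropy: the singular set of $\prod_l|y_l|^{-\alpha\beta}$ is the union of the coordinate planes, which is unbounded, and no positive power of this weight is integrable away from the origin (a single power is either too singular near the planes or too heavy at infinity), so a single global H\"older inequality separating $\widehat{g}$ from the weight is not available. Instead I would decompose $\R^3\setminus B$ into the boxes on which each $|y_l|$ is either $\le1$ or $>1$, and on each box integrate the variables out one at a time. In a variable $y_l$ confined to $\{|y_l|>1\}$ I would apply, in that variable alone, H\"older's inequality with exponents $(2/\alpha,\,2/(2-\alpha))$ and control $\widehat{g}$ by its $L^2$-norm in $y_l$ via the one-dimensional Plancherel identity; this step is legitimate exactly because $\int_{|t|>1}|t|^{-2\alpha\beta/(2-\alpha)}\,dt<\infty$, i.e. because $H>1/2$. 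In a variable $y_l$ confined to $\{|y_l|\le1\}$ I would keep the factor $|y_l|^{-\alpha\beta}$ (which is integrable there since $\alpha\beta<1$) and handle $\widehat{g}$ through the bound $\|g\|_{L^1}$, interpolated against the $L^2$-information so as to preserve the correct $\|g\|_{L^2}$ scaling. Recombining the one-dimensional estimates by Fubini and Minkowski's integral inequality, and applying Plancherel on $\R^3$ at the very end to turn the accumulated slice-wise $L^2$-norms of $\widehat{g}$ into $\|g\|_{L^2}$, should produce a bound of the form $C(\|g\|_{L^1}+\|g\|_{L^2})^\alpha$ for this piece too.

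I expect the delicate point to be precisely this last recombination: closing the interpolation so that the ``large'' coordinate directions (where $L^2$/Plancherel is used, and $H>1/2$ enters) and the ``small'' ones (where $\|g\|_{L^1}$ is used, and $\alpha\beta<1$ enters) fit together with all constants depending only on $\|g\|_{L^1}$ and $\|g\|_{L^2}$. The underlying reason one must be careful is that the estimate is scale-critical at $H=1/2$: a frequency profile concentrated in a thin slab far from a coordinate plane shows that $\|A^{H}g\|_{L^\alpha}$ is comparable to $\|g\|_{L^1}+\|g\|_{L^2}$ only because $H>1/2$ strictly, so this strict inequality has to be spent somewhere in the argument.
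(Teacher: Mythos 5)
Your treatment of the unit ball (bounding $|\widehat g|$ by $\|g\|_{L^1}$ against the integrable weight, using $\alpha\beta=\alpha H+1-\alpha<1$) is fine and is exactly what the paper does, and on the box where all three coordinates are large your one-step H\"older with exponents $(2/\alpha,2/(2-\alpha))$ plus Plancherel is also fine, since $\alpha\beta\cdot\frac{2}{2-\alpha}>1$ there. The genuine gap is the mixed boxes, say $D=\{|y_1|\le 1<|y_2|,|y_3|\}$, and this is precisely the step you defer with ``should produce a bound''. If you carry out your plan there --- H\"older in $(y_2,y_3)$ for fixed $y_1$, then the weight $|y_1|^{-\alpha\beta}$ against what remains --- you are left with
$\int_{|y_1|\le1}|y_1|^{-\alpha\beta}\bigl(\int_{\R^2}|\widehat g(y_1,y_2,y_3)|^2\,dy_2\,dy_3\bigr)^{\alpha/2}dy_1$,
and there is no Plancherel ``at the very end'' that turns this weighted, power-$\alpha/2$, slice-wise quantity into $\|g\|_{L^2}$: what it is controlled by is a mixed norm such as $\sup_{y_1}\|\widehat g(y_1,\cdot)\|_{L^2(\R^2)}$ or $\|g\|_{L^1_{x_1}(L^2_{x_2,x_3})}$, and these are \emph{not} bounded by $\|g\|_{L^1}+\|g\|_{L^2}$. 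Indeed, for $g(x)=\lambda^{-1}\varepsilon^{-2}\mathbf 1_{[0,\lambda]}(x_1)\mathbf 1_{[0,\varepsilon]^2}(x_2,x_3)$ with $\lambda=\varepsilon^{-4}$ one has $\|g\|_{L^1}=1$ and $\|g\|_{L^2}=\varepsilon\to0$, while both mixed quantities above are of order $\varepsilon^{-1}\to\infty$ (and for part of the parameter range the displayed intermediate integral itself diverges as $\varepsilon\to0$ even though both sides of the Proposition stay bounded for this $g$). So the ``recombination/interpolation'' you flag as delicate is not bookkeeping; as described, the plan does not close, and an essentially different idea is needed for the regions that are close to a coordinate hyperplane but far from the origin.

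For comparison with the paper: it does exactly what you declared unavailable, namely a single H\"older inequality over all of $B(0,1)^c$ with exponents $(2/\alpha,2/(2-\alpha))$, followed by a spherical change of variables applied to $\int_{B(0,1)^c}\prod_l|y_l|^{-\alpha\beta q}\,dy$. Your objection to that route is in fact well founded: the per-coordinate exponent is $\alpha\beta q=\frac{2(\alpha H+1-\alpha)}{2-\alpha}>1$ precisely when $H>1/2$, so this companion integral diverges on the slabs around the coordinate hyperplanes contained in $B(0,1)^c$ (e.g.\ $\{|y_1|<1,\,1<y_2,y_3<2\}$), and the paper's reduction to the radial integral silently absorbs a divergent angular factor into the constant $C$. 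In other words, the difficulty you identified is real, it is exactly where the printed proof is also deficient, and your sketch does not yet supply the missing argument; the mixed regions near the coordinate hyperplanes at large $|y|$ must be handled by a genuinely new estimate (with only $\|g\|_{L^1}$ and $\|g\|_{L^2}$ on the right-hand side) before the Proposition can be considered proved, and the asserted range $\alpha\in(0,2)$ should be treated with particular caution in that step.
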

\begin{proof}
We need to estimate $\myiiint_{\R^3}\left|A^{H}g(y)\right|^\alpha dy$. Let us split the integral into two: over the unit ball $B(0,1)$ and over its complement $B(0,1)^c$. To estimate the former, write 
\begin{gather*}
\myiiint_{B(0,1)}\left|A^{H}g(y)\right|^\alpha dy =\myiiint_{ B(0,1)}\left|\Real \widehat{g}(y)\right|^\alpha \prod_{l=1}^3\frac{1}{\left|y_l\right|^{\alpha(H-1)+1}}dy_l\\
\leq \|g\|^\alpha_{L^1(\R^3)}\myiiint_{ B(0,1)}\prod_{l=1}^3\frac{1}{\left|y_l\right|^{\alpha(H-1)+1}}dy_l,
\end{gather*}
and the integral is finite, since $H<1$. 

To estimate the integral over $B(0,1)^c$, use the H\"older inequality with $p=2/\alpha, q = 2/(2-\alpha)$ to get
\begin{equation}\label{B}
\begin{gathered}
\myiiint_{B(0,1)^c}\left|A^{H}g(y)\right|^\alpha dy =\myiiint_{ B(0,1)^c}\left|\Real \widehat{g}(y)\right|^\alpha \prod_{l=1}^3\frac{1}{\left|y_l\right|^{\alpha(H-1)+1}}dy_l\\
\leq \left(\myiiint_{ B(0,1)^c}\left|\Real \widehat{g}(y)\right|^2dy\right)^{\alpha/2}\left(\myiiint_{ B(0,1)^c}\prod_{l=1}^3\left|y_l\right|^{\alpha q(1-H)-q}dy_l\right)^{1/q}.
\end{gathered}
\end{equation}
In the second integral make the spherical change of variables \eqref{sfz} to obtain
\begin{gather*}
\myiiint_{B(0,1)^c}\prod_{l=1}^3\left|y_l\right|^{\alpha q(1-H)-q}dy_l\leq C \int_1^\infty \rho^{3\alpha q(1-H)-3q+2}d\rho <\infty,
\end{gather*}
since the inequality $3\alpha q(1-H)-3q+2<-1$ easily transforms to $\alpha (1-2H)<0$. For the first integral in \eqref{B} we use the Parseval identity: 
\begin{gather*}
\myiiint_{ B(0,1)^c}\left|\Real \widehat{g}(y)\right|^2dy \le \myiiint_{\R^3}\left|\widehat{g}(y)\right|^2dy=\frac{1}{\left(2\pi\right)^3}\myiiint_{\R^3}\left(g(y)\right)^2dy.
\end{gather*}
This implies the required estimate.
\end{proof}

\bibliographystyle{degruyter-plain}
\bibliography{pryhara-wc}

\providecommand{\bysame}{\leavevmode\hbox to3em{\hrulefill}\thinspace}
\providecommand{\MR}{\relax\ifhmode\unskip\space\fi MR }
\providecommand{\MRhref}[2]{%
  \href{http://www.ams.org/mathscinet-getitem?mr=#1}{#2}
}
\providecommand{\href}[2]{#2}
\begin{thebibliography}{10}

\bibitem{applebaum}
David Applebaum and Jiang-Lun Wu, Stochastic partial differential equations
  driven by {L}\'evy space-time white noise, \emph{Random Oper. Stochastic
  Equations} \textbf{8} (2000), 245--259.

\bibitem{bodnarchuk}
\=I.~M. Bodnarchuk, The wave equation with a stochastic measure, \emph{Teor. \u
  Imov\=\i r. Mat. Stat.} \textbf{94} (2016), 1--15, in Ukrainian.

\bibitem{bodnarchuk-shevchenko}
Iryna Bodnarchuk and Georgiy Shevchenko, Heat equation in a multidimensional
  domain with a general stochastic measure, \emph{Theory Probab. Math. Stat.}
  \textbf{93} (2016), 1--17.

\bibitem{chong}
Carsten Chong, Stochastic {PDE}s with heavy-tailed noise, \emph{Stochastic
  Process. Appl.} \textbf{127} (2017), 2262--2280.

\bibitem{dalang-frangos}
Robert~C. Dalang and N.~E. Frangos, The stochastic wave equation in two spatial
  dimensions, \emph{Ann. Probab.} \textbf{26} (1998), 187--212.

\bibitem{dalang-sanz}
Robert~C. Dalang and Marta Sanz-Sol{\'e}, H\"older-{S}obolev regularity of the
  solution to the stochastic wave equation in dimension three, \emph{Mem. Amer.
  Math. Soc.} \textbf{199} (2009), vi+70.

\bibitem{gorodnya}
D.~M. Gorodnya, On the existence and uniqueness of solutions of {C}auchy
  problems for wave equations with general stochastic measures, \emph{Theory
  Probab. Math. Statist.} (2012), 53--59.

\bibitem{kono-maejima}
Norio K\^ono and Makoto Maejima, {H\"older continuity of sample paths of some
  self-similar stable processes.}, \emph{Tokyo J. Math.} \textbf{14} (1991),
  93--100.

\bibitem{lifshits}
M.~A. Lifshits, \emph{Gaussian random functions}, Mathematics and its
  Applications 322, Kluwer Academic Publishers, Dordrecht, 1995.

\bibitem{millet-morien}
Annie Millet and Pierre-Luc Morien, On a stochastic wave equation in two space
  dimensions: regularity of the solution and its density, \emph{Stochastic
  Process. Appl.} \textbf{86} (2000), 141--162.

\bibitem{mueller-mytnik-stan}
Carl Mueller, Leonid Mytnik and Aurel Stan, The heat equation with
  time-independent multiplicative stable {L}\'evy noise, \emph{Stochastic
  Process. Appl.} \textbf{116} (2006), 70--100.

\bibitem{mytnik}
Leonid Mytnik, Stochastic partial differential equation driven by stable noise,
  \emph{Probab. Theory Related Fields} \textbf{123} (2002), 157--201.

\bibitem{peszat-zabczyk}
S.~Peszat and J.~Zabczyk, \emph{Stochastic partial differential equations with
  {L}\'evy noise}, Encyclopedia of Mathematics and its Applications 113,
  Cambridge University Press, Cambridge, 2007, An evolution equation approach.

\bibitem{pryhara-shevchenko1}
Larysa Pryhara and Georgiy Shevchenko, Approximations for a solution to
  stochastic heat equation with stable noise, \emph{Mod. Stoch. Theory Appl.}
  \textbf{3} (2016), 133--144.

\bibitem{pryhara-shevchenko2}
Larysa Pryhara and Georgiy Shevchenko, Stochastic wave equation in a plane
  driven by spatial stable noise, \emph{Mod. Stoch. Theory Appl.} \textbf{3}
  (2016), 237--248.

\bibitem{pryhara-shevchenko3}
Larysa Pryhara and Georgiy Shevchenko, Wave equation with a stable noise,
  \emph{Teor. \u Imov\=\i r. Mat. Stat.} \textbf{96} (2017), 143--155, in
  Ukrainian.

\bibitem{quer-tindel}
Llu{\'\i}s Quer-Sardanyons and Samy Tindel, The 1-d stochastic wave equation
  driven by a fractional {B}rownian sheet., \emph{Stochastic Processes Appl.}
  \textbf{117} (2007), 1448--1472.

\bibitem{radch2}
Vadym~M. Radchenko, Properties of integrals with respect to a general
  stochastic measure in a stochastic heat equation., \emph{Theory Probab. Math.
  Stat.} \textbf{82} (2011), 103--114.

\bibitem{samor}
Gennady Samorodnitsky, Integrability of stable processes, \emph{Probab. Math.
  Statist.} \textbf{13} (1992), 191--204 (1993).

\bibitem{samor-taqqu}
Gennady Samorodnitsky and Murad~S. Taqqu, \emph{Stable non-{G}aussian random
  processes: stochastic models with infinite variance.}, Chapman \& Hall, New
  York, NY, 1994.

\bibitem{rhmsf}
G.~Shevchenko, Local properties of a multifractional stable field, \emph{Theory
  Probab. Math. Stat.} \textbf{85} (2012), 159--168.

\bibitem{walsh}
John~B. Walsh, \emph{An introduction to stochastic partial differential
  equations}, \'{E}cole d'\'et\'e de probabilit\'es de {S}aint-{F}lour,
  {XIV}---1984, Lecture Notes in Math. 1180, Springer, Berlin, 1986,
  pp.~265--439.

\bibitem{yang-zhou}
Xu~Yang and Xiaowen Zhou, Pathwise uniqueness for an {SPDE} with {H}\"older
  continuous coefficient driven by {$\alpha$}-stable noise, \emph{Electron. J.
  Probab.} \textbf{22} (2017), Paper No. 4, 48.

\end{thebibliography}

\end{document}